\newtheorem{theorem}{Theorem}[section]
\newtheorem{lemma}[theorem]{Lemma}
\theoremstyle{definition}
\numberwithin{equation}{section}
\def\cA{{\mathcal A}}
\def\cB{{\mathcal B}}
\def\cC{{\mathcal C}}
\def\cD{{\mathcal D}}
\def\cI{{\mathcal I}}
\def\cN{{\mathcal N}}
\def\cS{{\mathcal S}}
\def\cT{{\mathcal T}}
\def\cW{{\mathcal W}}
\def\mand{\qquad \mbox{and} \qquad}
 \newcommand{\RN}[1]{%
  \textup{\uppercase\expandafter{\romannumeral#1}}%
}
\begin{document}


\baselineskip=17pt


\title[Distribution of $\alpha n + \beta$ modulo 1]{Distribution of $\alpha n +\beta$ modulo 1 over integers free from large and small primes}

\author[K. H. Yau]{Kam Hung Yau}

\address{Department of Pure Mathematics, University of New South Wales,
Sydney, NSW 2052, Australia}
\email{kamhung.yau@unsw.edu.au}

\date{}

\begin{abstract}
For any $\varepsilon >0$, we obtain an asymptotic formula for the number of solutions $n \le x$ to
$$
\lVert \alpha n + \beta \rVert < x^{-1/4+\varepsilon} 
$$
where $n$ is $[y,z]$-smooth for infinitely many real numbers $x$. In addition, we also establish an asymptotic formula with an additional square-free condition on $n$. Moreover, if $\alpha$ is  quadratic irrational then the asymptotic formulas hold for all sufficiently large $x$.

 Our tools come from the Harman sieve which we adapt suitably to sieve for $[y,z]$-smooth numbers. The arithmetic information comes from estimates for exponential sums.  
\end{abstract}

\subjclass[2010]{11K60, 11L07, 11N36}

\keywords{Diophantine approximation, exponential sum, sieve method}

\maketitle

\section{introduction}

A classical Theorem of Dirichlet~\cite[Theorem 185]{HW} states that if $\alpha$ is irrational then there exist infinitely many pairs of integers $(m,n)$ satisfying the inequality
\begin{equation} \label{dis1}
\left | \alpha - \frac{m}{n} \right | < \frac{1}{n^{2}}.
\end{equation} 
Such problems fall in the area of Diophantine approximation.
Equivalently by defining
 $\lVert x \rVert = \min_{v \in \mathbb{Z}} |x-v|$ to be the distance from $x$ to the nearest integer, (\ref{dis1}) implies
$$
\lVert \alpha n \rVert < n^{-1}
$$
for infinitely many positive integers $n$.
A natural extension to this problem is to consider if there exist infinitely many  solutions to
\begin{equation} \label{dio-ineq}
\lVert \alpha n + \beta \rVert < n^{-\kappa}.
\end{equation}
Here $\beta \in \mathbb{R}$, $\kappa >0$, $n \in \cN \subseteq \mathbb{N}$ and $\cN$ is some set of arithmetic interest.

Various results have been obtained when $\cN$ is the set of prime numbers. These include the results of Harman~\cite{H1,H2}, Heath-Brown \& Jia~\cite{Heath} and Jia~\cite{Jia}, which involve sieve methods and bounds of exponential sums, while Vaughan~\cite{V} obtained his result by applying what is now known as the {\it Vaughan Identity}~\cite{V2}, together with bounds for exponential sums.

We remark that Harman's sieve method first appeared when  Harman studied this exact problem~\cite{H1} to get the exponent $\kappa = 3/10$. For more details on the Harman sieve, see the monograph~\cite{H}. The best result to date is by Matom\"aki~\cite{M} with $\kappa < 1/3$ under the condition $\beta =0$ and employs Harman's sieve method where the arithmetic information comes from bounds for averages of Kloosterman sums.

Let $k>1$ be a fixed positive integer and $\cN$ the set of $k$th powers of primes. Baker \& Harman~\cite{BH} showed that we can take $\kappa < 3/20$ if $k=2$ and $\kappa < (3 \cdot 2^{k-1})^{-1}$ if $k \ge 3$. In particular, when $k=2$ this improves a result of Ghosh~\cite{Chosh}. Later, Wong~\cite{W} provided an improvement of Baker \& Harman~\cite{BH} in the range $3 \le k \le 12$.
 
When $\cN$ is the set of square-free numbers, the best result is due to Heath-Brown~\cite{Heath1} with $\kappa <2/3$ using an essentially elementary method. The result improves the previous work by Harman~\cite{H3} and Balog \& Perelli~\cite{BP}, who independently showed we can essentially take the exponent $\kappa < 1/2$.

In this paper, we consider the problem of establishing an asymptotic formula for the number of $[y,z]$-smooth $n \le x$ solutions  to~(\ref{dio-ineq}) (numbers with prime factors in the interval $[y,z]$) with $\kappa =1/4 -\varepsilon$, where $\varepsilon >0$. We also consider a hybrid problem that interpolates between square-free and $[y,z]$-smooth integers. 

We note that in the special case of smooth numbers we are able to obtain a non-trivial lower bound immediately. Indeed, for any fixed $\varepsilon >0$, consider the set
$$
\{n=ab \le x : p|ab \implies p < x^{\varepsilon}, x^{1/3} \le a < 2 x^{1/3} \}.
$$
By applying Lemma~\ref{expansion} and our Type II information (Lemma~\ref{TypeI,II-bound} with~(\ref{le 1/2})), we can show immediately that for infinitely many $x$ chosen correctly there are at least $x^{2/3+\varepsilon-o(1)}$ integers up to $x$ which are $x^{\varepsilon}$-smooth  and satisfies
$$
\lVert \alpha n + \beta \rVert < x^{-1/3 +\varepsilon}.
$$
The statement remains valid if we additionally require $n$ to be square-free.

\section{Notation}
For complex valued functions $f$ and $g$, we use the notation $f =O(g)$ and $f \ll g$ to mean there exists an absolute constant $C > 0$ such that $ |f(x)|\le C |g(x)|$ for all sufficiently large $x$.

Write $m \sim M$ when $ M \le m < 2M$, $e(x)= \exp(2 \pi i x)$ and let $\mu$ be the M\"obius function. 
Given a positive integer $n \ge 2$, we denote by $P^-(n)$ and $P^+(n)$ the smallest and largest prime factor of $n$ respectively. We always assume $p$ with or without sub/superscript are primes.

\section{Main Results}
For any real numbers $0<y \le z \le x$, we denote by
$$
S(x;y,z)= \{a \in [1,x] \cap \mathbb{N}: p|a \implies p \in [y,z] \}
$$
 the set of all integers in $[1,x]$ all of whose prime factors are in $[y,z]$. Moreover, we denote the cardinality of $S(x;y,z)$ by $\Psi(x;y,z)$.

For any positive integer $A > 0$, we write 
\begin{align*}
\cI(A) = \{ \alpha \in \mathbb{R} \backslash \mathbb{Q} : \alpha =[a_0; a_1 , \ldots ], |a_j| \le A, j \ge 0 \}.
\end{align*}
We note that $\cup_{A \in \mathbb{N}} \cI(A)$ contains the set of all quadratic irrationals. We can now state our first result.

\begin{theorem} \label{smooth}
There exists an increasing sequence  $( x_{k} )_{k \in \mathbb{N}}$ of positive integers such that if $2 \le y < x_k^{1/2 }$, $y < z \le x_k$, $\varepsilon > 0$ and $\delta = x_k^{-1/4 +\varepsilon}$, then 
\begin{equation} \label{y-z smooth eqn}
\sum_{\substack{n \in S(x_k;y,z) \\ \lVert \alpha n + \beta \rVert < \delta \\ }} 1   =  2\delta \Psi(x_k;y,z)  + O(x_k^{ 3/4+ \varepsilon /2 +o(1) }).
\end{equation}
Moreover,~$\normalfont{(\ref{y-z smooth eqn})}$ \textit{holds for the sequence $( x_{k})_{k \in \mathbb{N}} = (k)_{k \in \mathbb{N}}$  uniformly for all $\alpha \in \cI(A)$ and any fixed positive integer $A$}.
\end{theorem}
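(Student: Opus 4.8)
The plan is to rerun the proof of the first part of the theorem, replacing every appeal to a judiciously chosen $x_k$ by the uniform control on exponential sums that bounded partial quotients afford. First I would apply Lemma~\ref{expansion} to the indicator function of $\lVert\alpha n+\beta\rVert<\delta$, obtaining
\[
\sum_{\substack{n\in S(x;y,z)\\ \lVert\alpha n+\beta\rVert<\delta}} 1 \;=\; 2\delta\,\Psi(x;y,z)\;+\;O\!\left(\frac{\Psi(x;y,z)}{H}\right)\;+\;O\!\left(\sum_{1\le|h|\le H}\frac{1}{|h|}\,\bigl|T_h\bigr|\right),
\]
where $T_h=\sum_{n\in S(x;y,z)}e(h\alpha n)$ and $H$ is a free parameter, which I would take of size $\delta^{-1}x^{\varepsilon/2}=x^{1/4-\varepsilon/2}$; the factor $e(h\beta)$ has modulus one, so $\beta$ enters no estimate. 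Since $\Psi(x;y,z)\le x$ the first error term is $O(x^{3/4+\varepsilon/2})$, exactly the bound asserted, so the problem reduces to showing $\sum_{1\le|h|\le H}|h|^{-1}\bigl|T_h\bigr|\ll x^{3/4+\varepsilon/2+o(1)}$ uniformly for $2\le y<x^{1/2}$, $y<z\le x$, and $\alpha\in\cI(A)$.

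To bound $T_h$ I would feed $\mathbbm{1}_{S(x;y,z)}$ into the Harman sieve. Discarding $n<x^{1/2}$ (a trivial $O(x^{1/2})$), the key structural input is the multiplicative flexibility of $[y,z]$-smooth numbers: ordering the prime factors of $n\in S(x;y,z)$ shows that $n$ has a divisor in every multiplicative window $[Dz^{-1},D]$ with $D\le n$. Consequently, applying Buchstab's identity to the conditions $P^+(n)\le z$ and $P^-(n)\ge y$, every non-prime $n\ge x^{1/2}$ in $S(x;y,z)$ contributes to a Type I sum $\sum_{d\le D}c_d\sum_m e(h\alpha dm)$ with $D\le x^{1/2}$ or to a Type II sum $\sum_{m\sim M}\sum_n a_m b_n\,e(h\alpha mn)$ with $MN\asymp x$ and $M$ in the range permitted by~(\ref{le 1/2}); the remaining prime $n$ (with $y\le n\le z$), which matter only when $z$ is not small, are handled within the Harman sieve as well --- in effect by Vaughan's identity with both cutoffs taken at $x^{1/4}$, which keeps the Type I level at $x^{1/2}$ and the Type II variables in $[x^{1/4},x^{1/2}]$, so that the very same estimates apply; this is admissible precisely because $\kappa=1/4-\varepsilon$ is so small. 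Applying the Type I and Type II bounds of Lemma~\ref{TypeI,II-bound} and summing over $1\le|h|\le H$ then reduces the whole error to a bounded number of auxiliary sums $\sum_{k\le K}\min(x/k,\lVert k\alpha\rVert^{-1})$ with $K\le x^{3/4}$, which must be shown to be $\ll(x/K+K)\,x^{o(1)}$.

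That last estimate is the only point at which the hypothesis $\alpha\in\cI(A)$ is used, and it is where the choice of $x_k$ becomes unnecessary. For a general irrational $\alpha$ one controls $\sum_{k\le K}\min(x/k,\lVert k\alpha\rVert^{-1})$ only when some convergent denominator of $\alpha$ lies in a window comparable to $K$, which is exactly what forces the passage to a sparse sequence $(x_k)$ in the first part; but when $|a_j|\le A$ for all $j$ the convergent denominators satisfy $q_{j+1}\le(A+1)q_j$ and therefore meet every such window, so a standard argument gives $\sum_{k\le K}\min(x/k,\lVert k\alpha\rVert^{-1})\ll_A(x/K+K)\log(2x)$ for every $x$ and every $K$, with implied constant depending only on $A$; passing from $\alpha$ to $h\alpha$ costs only a further $x^{o(1)}$ since the partial quotients of $h\alpha$ are bounded in terms of $A$ and $h\le H$. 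Inserting this into the middle paragraph yields the required bound for $\sum_{1\le|h|\le H}|h|^{-1}|T_h|$ uniformly in $\alpha\in\cI(A)$ and for all large $x$, in particular for $x=x_k=k$, which is the assertion. I expect the real work to be the sieve bookkeeping of the middle paragraph --- checking that with $\kappa=1/4-\varepsilon$ every branch of the decomposition closes as an admissible Type I or Type II sum, or carries negligible total mass, uniformly over the whole range of $y$ and $z$, the delicate corner being $z$ close to $x$ where $S(x;y,z)$ contains primes almost up to $x$ --- rather than the classical continued-fraction input of the last paragraph.
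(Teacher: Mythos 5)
Your overall architecture (Fourier expansion, then a combinatorial decomposition of the smooth indicator into Type I/II sums, then the bounded-partial-quotient input replacing the sparse sequence) is reasonable in spirit, but the middle paragraph has a genuine gap exactly at the corner you flag. When $z>x^{3/4}$ and $y<x^{1/4}$, the set $S(x;y,z)$ contains many non-prime integers $n=pm$ with $p\in(x^{3/4},x]$ prime and $1<m<x^{1/4}$ a $[y,z]$-smooth cofactor. Every divisor of such an $n$ is either $<x^{1/4}$ or $>x^{3/4}$, so $n$ has no divisor in the admissible Type II window $[x^{1/4},x^{3/4}]$; your ``divisor in every multiplicative window $[Dz^{-1},D]$'' observation is vacuous here because the window length is $z$, which is nearly $x$. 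Nor can these $n$ be absorbed into a Type I sum of level $x^{1/2}$ (or even $x^{3/4}$), since the long variable $p$ is constrained to be prime rather than ranging freely over an interval, and your appeal to Vaughan's identity covers only the terms in which $n$ itself is prime. Closing this case requires detecting the primality of the large factor $p$ for each small smooth $m$: either by applying a prime-detecting identity to the variable $p$ with ranges depending on $m$ and then separating the $m$-dependent summation conditions (e.g.\ by Perron's formula), or, as the paper does in Lemma~\ref{3/4}, by a role reversal followed by an application of Harman's sieve~\cite[Lemma 2]{H2} to the sets $\cA_n^{(M)}$, using both the Type I and Type II estimates. This is a missing idea, not bookkeeping, and it is precisely why the paper runs a Buchstab/Harman comparison of $\cA$ against $\cB$ rather than bounding $\sum_{n\in S(x;y,z)}e(h\alpha n)$ by a direct decomposition.

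A secondary but quantitatively relevant point: the per-$h$ treatment via the partial quotients of $h\alpha$ does not cost only $x^{o(1)}$. For $\alpha\in\cI(A)$ the partial quotients of $h\alpha$ are in general only $O_A(h)$, so the resulting bound on $\sum_{k\le K}\min(x/k,\lVert kh\alpha\rVert^{-1})$ degrades by a factor of order $h$; for the Type II sums this loss is exactly the $\delta^{1/2}$ saving in~(\ref{le 1/2}) and it pushes the bound above $x^{3/4+\varepsilon/2}$ in the range $M\asymp x^{1/2}$. The correct device, as in Lemmas~\ref{typeI exp-bound} and~\ref{TypeI,II-bound}, is to keep the average over $h$ (with the coefficient decay $|c_\ell|\le\min(2\delta+1/(L+1),3/(2\ell))$) and to use a single rational approximation to $\alpha$ itself with denominator $q\asymp x^{2/3}$; the hypothesis $\alpha\in\cI(A)$ is then used only to guarantee, via $q_s\le(A+1)q_{s-1}$, that such a $q$ exists for every large $x$, which is exactly the paper's closing argument and is the right way to obtain the uniform statement for $(x_k)=(k)$.
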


For any real numbers $0 <y \le z \le x$, we denote by
$$
S^*(x;y,z) =\{ a \in [1,x] \cap \mathbb{N} : \mu^2(a)=1, p|a \implies p \in [y,z] \}
$$
the set of all square-free integers in $[1,x]$ all of whose prime factors are in $[y,z]$. We also denote the cardinality of $ S^*(x;y,z)$ by $\Psi^*(x;y,z)$. The next theorem is essentially Theorem~\ref{smooth}, where we also require the integers we are counting to be square-free.

\begin{theorem} \label{y-z smooth}
There exists an increasing sequence $( x_{k} )_{k \in \mathbb{N}}$  of positive integers such that if $2 \le y < x_k^{1/2 }$, $y < z \le x_k$, $\varepsilon > 0$ and $\delta = x_k^{-1/4 +\varepsilon}$, then we have
\begin{equation} \label{s.f y-eqn}
\sum_{\substack{n \in S^*(x_k;y,z) \\ \lVert \alpha n + \beta \rVert < \delta \\ }} 1   =  2\delta \Psi^*(x_k;y,z)  + O(x_k^{ 3/4+ \varepsilon/2 +o(1) }).
\end{equation}
Moreover,~$\normalfont{(\ref{s.f y-eqn})}$ \textit{holds for the sequence $( x_{k})_{k \in \mathbb{N}} = (k)_{k \in \mathbb{N}}$  uniformly for all $\alpha \in \cI(A)$ and any fixed positive integer $A$}.
\end{theorem}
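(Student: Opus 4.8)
The plan is to run the argument of Theorem~\ref{smooth} essentially verbatim, carrying the extra square-free restriction through it. Recall the shape of that proof. One first applies Lemma~\ref{expansion} to the indicator of $\lVert\alpha n+\beta\rVert<\delta$, replacing it by its mean value $2\delta$ plus a trigonometric polynomial $\sum_{1\le|h|\le H}c_h\,e\bigl(h(\alpha n+\beta)\bigr)$ of degree $H$ (a small power of $x_k$) with $\sum_h|c_h|\ll x_k^{o(1)}$; summing against the indicator of $S^*(x_k;y,z)$ produces the main term $2\delta\Psi^*(x_k;y,z)$ and leaves one to prove
\begin{equation*}
\sum_{n\in S^*(x_k;y,z)}e(h\alpha n)\ll x_k^{3/4+o(1)}\qquad\text{uniformly for }1\le h\le H .
\end{equation*}
For Theorem~\ref{smooth} the corresponding sum over $S(x_k;y,z)$ is handled by feeding a Buchstab-type decomposition of the $[y,z]$-smooth indicator into the Harman sieve and bounding the resulting Type~I and Type~II sums by Lemma~\ref{TypeI,II-bound} (with~$(\ref{le 1/2})$ controlling the Type~II ranges); the subsequence $(x_k)$ — or, in the $\cI(A)$ case, the bounded partial quotients of $\alpha$ together with $x_k=k$ — is precisely what supplies the rational approximations to $\alpha$ those estimates require, uniformly in $h\le H$.

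The one genuinely new feature is square-freeness, and the cleanest route is to build it into the sieve from the start: at each step of the Buchstab iteration one restricts all summation variables to be square-free. This can only delete terms, never create them, so every Type~I and Type~II inequality used for Theorem~\ref{smooth} is inherited without change; the only wrinkle is that requiring a product $dm$ (or $mn$) to be square-free forces $d$, $m$ square-free \emph{and} coprime, and the coprimality is removed by an inner M\"obius inversion $\sum_{j\mid(d,m)}\mu(j)$. After the substitution $d=jd'$, $m=jm'$ the small values of $j$ are absorbed into the Type~I/II sums, while the large values — where $j^2\mid n$ with $j>x_k^{1/4}$ — contribute at most $\sum_{j>x_k^{1/4}}x_k/j^2\ll x_k^{3/4}$ by the trivial bound. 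The main term emerges as $2\delta\Psi^*(x_k;y,z)$ because the comparison sequence now carries the density of $S^*$ in place of that of $S$.

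An equivalent and more transparent way to say this is to write $\mu^2(n)=\sum_{e^2\mid n}\mu(e)$ at the level of the exponential sum,
\begin{equation*}
\sum_{n\in S^*(x_k;y,z)}e(h\alpha n)=\sum_{e\in S(x_k^{1/2};y,z)}\mu(e)\sum_{m\in S(x_k/e^2;y,z)}e\bigl(he^2\alpha\, m\bigr),
\end{equation*}
note that $\sum_{e\in S(x_k^{1/2};y,z)}\mu(e)\Psi(x_k/e^2;y,z)=\Psi^*(x_k;y,z)$ makes the main terms match exactly, and then split at a fixed $\eta$ with $\varepsilon<\eta<2\varepsilon$. For $e\le x_k^{\eta}$ the inner sum is estimated by Theorem~\ref{smooth} applied with $x_k$ replaced by $x_k/e^2$ and $\alpha$ by $he^2\alpha$ — a harmless perturbation, since the new linear coefficient differs from $\alpha$ by an integer factor that is only a tiny power of the new main parameter, so $\varepsilon$ may be adjusted accordingly; summing over such $e$ gives $O(x_k^{3/4+\varepsilon/2+o(1)})$. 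For $e>x_k^{\eta}$ the trivial bound $\Psi(x_k/e^2;y,z)\le x_k/e^2$ (together with the matching main terms $2\delta\Psi(x_k/e^2;y,z)$, whose sum is $\ll\delta x_k^{1-\eta}\le x_k^{3/4}$) is enough.

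I expect the main obstacle to be bookkeeping rather than a new idea. One must check (i) that restricting the sieve variables to square-free integers never shrinks a Type~II range below the threshold demanded by~$(\ref{le 1/2})$ — immediate, since it only restricts — and, in the $\mu^2$-reduction form, (ii) that the enlarged family of linear coefficients $he^2\alpha$ (for $h\le H$, $e\le x_k^{\eta}$) still admits the approximations required by Lemma~\ref{TypeI,II-bound}: this is automatic when $\alpha\in\cI(A)$, and for general $\alpha$ is arranged by the same admissible thinning that already produces $(x_k)$ in Theorem~\ref{smooth}, now imposed for every relevant multiplier $he^2$ at once. Finally one verifies that the accumulated $x_k^{o(1)}$ losses — from $\sum_h|c_h|$, from the M\"obius variable $j$, and from the divisor-type quantity $\#\{e:e^2\mid n\}$ — are all absorbed into the stated error $O(x_k^{3/4+\varepsilon/2+o(1)})$. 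With these points settled the argument closes exactly as for Theorem~\ref{smooth}.
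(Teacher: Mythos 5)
There is a genuine gap, and it lies in the two reductions you lean on rather than in the square-free bookkeeping itself. First, your opening reduction — that after Lemma~\ref{expansion} it suffices to prove $\sum_{n\in S^*(x_k;y,z)}e(h\alpha n)\ll x_k^{3/4+o(1)}$ \emph{uniformly in the individual frequency} $h$ — is not how the argument can work. The strength of the Type~II bound~(\ref{le 1/2}) comes precisely from keeping the whole weighted average over $\ell$, with $|c_\ell|\ll\min\{\delta,1/\ell\}$, inside the exponential-sum estimate: it is this averaging that produces the $\delta/M$ term and hence the bound $x^{1+o(1)}(\delta/M)^{1/2}\ll x^{3/4+\varepsilon/2}$ at $M=x^{1/4}$. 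For a single fixed $h$ no such $\delta$-gain is available, and in the range $M\in[x^{1/4},x^{1/2})$ an individual bilinear bound of quality $x^{3/4+o(1)}$ is out of reach with this machinery; worse, for frequencies $h$ with $\lVert h\alpha\rVert$ very small (e.g.\ $h$ a convergent denominator) the individual sum can genuinely be of size $x^{1-o(1)}$, and such $h$ are only harmless because of the weight $|c_h|\le 3/(2h)$. Second, your ``more transparent'' route via $\mu^2(n)=\sum_{e^2\mid n}\mu(e)$ is not a harmless perturbation: applying Theorem~\ref{smooth} with $\alpha$ replaced by $he^2\alpha$ at scale $x_k/e^2$ requires a rational approximation to $he^2\alpha$ with denominator of size about $(x_k/e^2)^{2/3}$, \emph{simultaneously} for all $h\le L$ and $e\le x_k^{\eta}$ at the same $x_k$. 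For general $\alpha$ this cannot be arranged by thinning the sequence $(x_k)$ (each multiplier has its own good scales), and it is not automatic for $\alpha\in\cI(A)$ either: an integer multiple of a badly approximable number need not have bounded partial quotients — taking $h$ near a convergent denominator makes $\lVert he^2\alpha\rVert$ tiny, creating a huge gap in the convergent denominators of $he^2\alpha$ exactly where you need one of size $(x_k/e^2)^{2/3}$.

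The salvageable part of your proposal — carrying the square-free condition through the sieve decomposition and inheriting the Type~I/II estimates unchanged — is in fact what the paper does, but it is implemented without any new exponential-sum input and without your gcd Möbius inversion. The paper introduces $\cT(\cC;y,z)$, the square-free analogue of $\cS$, and proves square-free versions of the three sieve lemmas (Lemmas~\ref{1/4 3/4 sf}--\ref{3/4 sf}): the Buchstab step extracts the largest prime factor $p$ and restricts the cofactor's primes to $[y,p-1]$, so the extracted primes are automatically distinct, while the remaining condition $\mu^2(n)$ is simply absorbed into the arbitrary divisor-bounded coefficients $b_n$ (resp.\ into $c_n$ in the role-reversal Lemma~\ref{3/4 sf}); Theorem~\ref{y-z smooth} then follows by running the proof of Theorem~\ref{smooth} with these lemmas in place of Lemmas~\ref{1/4 3/4}--\ref{3/4}. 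To repair your write-up you should drop both the individual-frequency reduction and the $he^2\alpha$ rescaling, and instead keep the $\ell$-averaged sums of Lemmas~\ref{typeI exp-bound} and~\ref{TypeI,II-bound} throughout, enforcing square-freeness inside the decomposition as just described.
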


If we fix $0 < u_2 < u_1$ with $2 < u_1 $, $u_2 < \lfloor u_1 \rfloor$ and set $y = x_k^{1/u_1 }$ and $z=x_k^{1/u_2 }$ then both  $\Psi(x_k;y,z)$, $\Psi^*(x_k;y,z)$ in the main term of~(\ref{y-z smooth eqn}) and~(\ref{s.f y-eqn}) respectively are bounded below by the number of integers $n=p_1 \ldots p_j$ that are products of $j = \lfloor u_1 \rfloor$ distinct primes  with $p_i \in [y,z]$. This gives the lower bound 
$$
x_k^{1-o(1)} < \Psi(x_k;y,z),\Psi^*(x_k;y,z).
$$ 
It follows that our Theorem~\ref{smooth} and~\ref{y-z smooth} are non-trivial in this region.

We note that by~\cite[Theorem 1]{F} of Friedlander, we can obtain an asymptotic formula for $\Psi(x;y,z)$ in certain regions. We also mention that Saias~\cite{S1,S2,S3} has  extensively studied this quantity. In particular, our  $\Psi(x_k;y,z)$ is $\theta(x_k,z,y)$ or $\Theta(x_k,z,y)$ in the notation of Friedlander and Saias respectively. 

We remark that we assume $y < x_k^{1/2}$ in Theorem~\ref{smooth} and~\ref{y-z smooth} since if $y \ge x_k^{1/2}$ and $z \ge x_k$ then both $S(x_k;y,z)$ and $S^*(x_k;y,z)$ essentially count primes in the interval $[y,x_k]$ and the result of Harman~\cite[Theorem 3.2]{H} covers this case with $\delta = x_k^{-1/4+\varepsilon}$.

It is easy to see that the proof of Theorem~\ref{smooth} can be adjusted to prove Theorem~\ref{y-z smooth}, so we will only give full details for the proof of Theorem~\ref{smooth}.

\section{Preparations}

For any $\delta >0$, we define
$$
\chi(r) = 
\begin{cases}
1 &\mbox{ if } \lVert r \rVert < \delta, \\ 
0 &\mbox{ otherwise.}
\end{cases}
$$
We recall a Lemma from~\cite[Chapter 2]{B}, which provides a finite Fourier approximation to $\chi$. This converts the problem of detecting solutions to~(\ref{dio-ineq}) to a problem about estimates for exponential sums.
\begin{lemma} \label{expansion}
For any positive integer $L$, there exist complex sequences $( c_{\ell}^{-} )_{|\ell |\le L}$ and $ ( c_{\ell}^{+} )_{|\ell |\le L} $ such that
$$
2 \delta - \frac{1}{L+1} + \sum_{0 < |\ell | \le L} c_{\ell}^{-} e(\ell \theta) \le \chi ( \theta) \le 2 \delta + \frac{1}{L+1} + \sum_{0 < | \ell| \le L} c_{\ell}^{+} e(\ell \theta)
$$
where
\begin{equation*} \label{fourier-expansion-coeff}
| c_{\ell}^{\pm} |  \le \min \left \{ 2 \delta + \frac{1}{L+1}, \frac{3}{2 \ell} \right \}.
\end{equation*}
\end{lemma}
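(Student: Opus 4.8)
The statement to prove is Lemma~\ref{expansion}, the Vinogradov–Vaaler style finite Fourier approximation to the indicator function $\chi$ of the interval $(-\delta,\delta)$ modulo $1$. Let me sketch a proof plan.

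\medskip

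The plan is to reduce everything to the classical Beurling–Selberg extremal functions, which is how the cited reference~\cite[Chapter 2]{B} proceeds. First I would set up the one-sided majorant and minorant for the indicator function of an interval on $\mathbb{R}$. Recall that Beurling's function $B(x)$ is an entire function of exponential type $2\pi$ with $B(x) \ge \operatorname{sgn}(x)$ for all real $x$ and $\int_{\mathbb{R}} (B(x) - \operatorname{sgn}(x))\,dx = 1$; from it Selberg built functions $S^{\pm}_{\delta}(x)$ of exponential type $2\pi(L+1)$ (after rescaling) satisfying $S^{-}_{\delta}(x) \le \mathbbm{1}_{[-\delta,\delta]}(x) \le S^{+}_{\delta}(x)$ on $\mathbb{R}$ with $\int_{\mathbb{R}} S^{\pm}_{\delta}(x)\,dx = 2\delta \pm \frac{1}{L+1}$. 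The key point is that a function of exponential type $2\pi(L+1)$ that is integrable has a Fourier transform supported in $[-(L+1),(L+1)]$.

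\medskip

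Second, I would periodize: set $\psi^{\pm}(\theta) = \sum_{n \in \mathbb{Z}} S^{\pm}_{\delta}(\theta + n)$. Since $\chi$ is the $1$-periodization of $\mathbbm{1}_{[-\delta,\delta]}$ and $S^{-}_{\delta} \le \mathbbm{1}_{[-\delta,\delta]} \le S^{+}_{\delta}$ pointwise, periodization preserves the inequalities, so $\psi^{-}(\theta) \le \chi(\theta) \le \psi^{+}(\theta)$. By the Poisson summation formula, $\psi^{\pm}(\theta) = \sum_{\ell \in \mathbb{Z}} \widehat{S^{\pm}_{\delta}}(\ell)\, e(\ell\theta)$, and because $\widehat{S^{\pm}_{\delta}}$ is supported in $[-(L+1),(L+1)]$ — in fact we can arrange type strictly less so that only $|\ell| \le L$ survive — this is a finite trigonometric polynomial. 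The constant term is $\widehat{S^{\pm}_{\delta}}(0) = \int_{\mathbb{R}} S^{\pm}_{\delta} = 2\delta \pm \frac{1}{L+1}$, which produces the main term together with the $\pm\frac{1}{L+1}$ error. Setting $c^{\pm}_{\ell} = \widehat{S^{\pm}_{\delta}}(\ell)$ for $0 < |\ell| \le L$ gives the claimed form.

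\medskip

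Third, for the coefficient bound I would use two estimates on $\widehat{S^{\pm}_{\delta}}(\ell)$. On the one hand, $|\widehat{S^{\pm}_{\delta}}(\ell)| \le \int_{\mathbb{R}} |S^{\pm}_{\delta}|$; since $S^{+}_{\delta} \ge 0$ and $S^{-}_{\delta}$ can be taken $\ge 0$ as well (or one controls its negative part), this integral is $2\delta + \frac{1}{L+1}$ up to the construction, giving the first term in the minimum. On the other hand, $\widehat{S^{\pm}_{\delta}}(\ell)$ differs from $\widehat{\mathbbm{1}_{[-\delta,\delta]}}(\ell) = \frac{\sin(2\pi\delta\ell)}{\pi\ell}$, whose modulus is at most $\frac{1}{\pi|\ell|} \le \frac{3}{2|\ell|}$; one checks that the Selberg construction does not spoil this bound beyond the stated constant. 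Combining the two bounds gives $|c^{\pm}_{\ell}| \le \min\{2\delta + \frac{1}{L+1}, \frac{3}{2|\ell|}\}$.

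\medskip

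The main obstacle is the careful bookkeeping in the Selberg construction: getting the exponential type to match $L+1$ exactly so that the Fourier expansion truncates at $|\ell| \le L$, verifying that the $L^1$-norms of $S^{\pm}_{\delta}$ are $2\delta \pm \frac{1}{L+1}$ precisely, and checking the constant $\frac{3}{2}$ in the second coefficient bound. All of this is classical and carried out in detail in~\cite[Chapter 2]{B}, so I would cite it rather than reproduce it; the only genuine content on our side is the periodization and Poisson summation step, which is routine once the real-variable inputs are in place.
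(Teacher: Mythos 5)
Your proposal is essentially the same argument as the paper's: the paper gives no proof of this lemma at all, citing \cite[Chapter 2]{B}, and your Beurling--Selberg majorant/minorant plus periodization and Poisson summation is precisely the standard construction behind that citation, with the coefficient bound following from $\frac{1}{L+1}+\min\{2\delta,\frac{1}{\pi|\ell|}\}\le\min\{2\delta+\frac{1}{L+1},\frac{3}{2|\ell|}\}$ for $0<|\ell|\le L$. One small correction: the minorant $S^{-}_{\delta}$ cannot be taken nonnegative (it must be $\le 0$ off $[-\delta,\delta]$ since it minorizes the indicator there); the clean way to get the stated coefficient bound for both signs is to note that $S^{+}_{\delta}-\mathbbm{1}_{[-\delta,\delta]}$ and $\mathbbm{1}_{[-\delta,\delta]}-S^{-}_{\delta}$ are nonnegative with integral $\frac{1}{L+1}$, so their Fourier transforms are bounded by $\frac{1}{L+1}$ in modulus, and then add the bound $\min\{2\delta,\frac{1}{\pi|\ell|}\}$ for the transform of the indicator itself.
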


Let $\alpha ,x \in \mathbb{R}$ be such that $|\alpha - a/q| < 1/q^2$ where $1 \le a \le q \ll x^{O(1)}$ and $(a,q)=1$. Let  $(a_{m})_{m \in \mathbb{N}}$, $(b_{n})_{n \in \mathbb{N}}$, $(c_{\ell})_{\ell \in \mathbb{N} }$ be complex sequences satisfying 
$$
|a_{m}| \le \tau(m), \quad |b_{n}| \le \tau(n), \quad 
|c_{\ell}| \le \min \Big \{ 2 \delta + \frac{1}{\lfloor x \rfloor +1}, \frac{3}{2\ell} \Big \}
$$
whenever $m,n, \ell \in \mathbb{N}$. We will also use the classical divisor bound $\tau(r) \ll x^{o(1)}$ for $r \le x$. Lastly for $\varepsilon >0$, we denote $\delta = x^{-\frac{1}{4} + \varepsilon}$.

The next two Lemmas provide our Type I and II information respectively. They can be obtained by following the method of~\cite[Section 2.3]{H}.

\begin{lemma} \label{typeI exp-bound}
We have
\begin{align*}
 \sum_{\ell \le x}  c_{\ell}  & \sum_{\substack{mn \le x \\ m \sim  M }}  a_{m}  e(\alpha \ell mn) \ll  (M +  xq^{-1} + \delta q)x^{o(1)}  .
\end{align*}
\end{lemma}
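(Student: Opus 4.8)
The plan is to treat the sum as a linear (Type I) exponential sum in the variable $n$ and to exploit the standard geometric-series bound for $\sum_n e(\alpha \ell m n)$ in terms of $\lVert \alpha \ell m \rVert$. First I would open the $c_\ell$ and $a_m$ coefficients by absolute values, using $|a_m| \le \tau(m) \ll x^{o(1)}$ and $|c_\ell| \le \min\{2\delta + 1/(\lfloor x\rfloor+1), 3/(2\ell)\}$; the divisor-bounded $a_m$ costs only $x^{o(1)}$, so the essential object is $\sum_{\ell \le x} |c_\ell| \sum_{m \sim M} \bigl|\sum_{n \le x/m} e(\alpha \ell m n)\bigr|$. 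For the inner sum over $n$ one uses $\bigl|\sum_{n\le N} e(\vartheta n)\bigr| \ll \min\{N, \lVert \vartheta \rVert^{-1}\}$ with $\vartheta = \alpha \ell m$ and $N = x/m \asymp x/M$. This reduces everything to bounding $\sum_{\ell \le x} |c_\ell| \sum_{m \sim M} \min\{x/M, \lVert \alpha \ell m\rVert^{-1}\}$.

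Next I would collect the product $k = \ell m$, which ranges over $k \le x \cdot (2M)$, with each $k$ counted with multiplicity at most $\tau(k) \ll x^{o(1)}$ and weighted by $|c_\ell| \le 2\delta + O(1/x) \ll \delta + x^{-1}$ (using the first term in the minimum defining $c_\ell$, which is legitimate and avoids logarithmic losses). So up to $x^{o(1)}$ the task is to estimate $(\delta + x^{-1})\sum_{k \ll xM} \min\{x/M, \lVert \alpha k\rVert^{-1}\}$. Here I invoke the classical lemma on sums of $\min\{U, \lVert \alpha k \rVert^{-1}\}$ over $k$ in an interval of length $K$: when $\alpha$ has rational approximation $a/q$ with $(a,q)=1$ and $|\alpha - a/q| < 1/q^2$, one has $\sum_{k \le K} \min\{U, \lVert\alpha k\rVert^{-1}\} \ll (K q^{-1} + 1)(U + q\log q)$, i.e. $\ll (KU q^{-1} + U + K + q)x^{o(1)}$. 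Applying this with $K \asymp xM$ and $U = x/M$ yields $\ll (x^2 q^{-1} + x/M + xM + q)x^{o(1)}$; multiplying by the weight $\delta + x^{-1} = x^{-1/4+\varepsilon} + x^{-1}$ and simplifying (noting $M \le x$, so $xM \le x^2$ and the $\delta \cdot xM$ term is dominated once one tracks it against the target, and the $x^{-1}\cdot x^2 q^{-1}$ term gives $xq^{-1}$) should collapse to the claimed bound $(M + xq^{-1} + \delta q)x^{o(1)}$ after checking each of the finitely many cross terms against these three.

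The main obstacle I anticipate is the bookkeeping in that last step: making sure that every term produced by the $\min\{U,\lVert\alpha k\rVert^{-1}\}$-lemma, once multiplied by the two parts $\delta$ and $x^{-1}$ of the $c_\ell$-bound, is genuinely dominated by one of $M$, $xq^{-1}$, or $\delta q$ (times $x^{o(1)}$). In particular one must handle the regime $M$ large versus $M$ small separately — for large $M$ the length $K \asymp xM$ of the $k$-sum is large and the $Kq^{-1}U$ term is the dangerous one, while for small $M$ the "$+U$" and "$+K$" terms matter — and one has to use the $3/(2\ell)$ branch of the $c_\ell$ bound in exactly those ranges of $\ell$ where the flat bound $\delta$ is too weak, splitting the $\ell$-sum dyadically if necessary. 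None of this is deep: it is the routine extraction of a clean Type I bound from the divisor-bounded coefficients plus the standard $\min$-sum estimate, exactly as in \cite[Section 2.3]{H}, and I would present it at that level of detail.
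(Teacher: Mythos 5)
Your overall strategy (geometric series in $n$, divisor-bounded collection of $k=\ell m$, then the classical lemma on $\sum_k \min\{U,\lVert \alpha k\rVert^{-1}\}$) is the right family of ideas and is essentially what the reference to Harman's book points to, but as written the quantitative bookkeeping does not close, and the gap is real rather than cosmetic. Your main computation takes the flat weight $|c_\ell|\ll\delta+x^{-1}$, $K\asymp xM$, $U=x/M$, and the flat-cap lemma $\sum_{k\le K}\min\{U,\lVert\alpha k\rVert^{-1}\}\ll (K/q+1)(U+q\log q)$; this produces the terms $\delta x^2/q$ and $\delta xM$, and neither is dominated by $(M+xq^{-1}+\delta q)x^{o(1)}$: with $q=x^{2/3}$ (the value actually used later) and $M=x^{1/2}$ one gets $\delta xM=x^{5/4+\varepsilon}$ against a target of size about $x^{3/4}$, and $\delta x^2/q=x^{13/12+\varepsilon}$. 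So the assertion that ``the $\delta\cdot xM$ term is dominated once one tracks it against the target'' is false, and the claimed collapse to the stated bound cannot happen along this route. Your hedge --- split $\ell$ dyadically, $\ell\sim L_1$, and use $|c_\ell|\ll\min\{\delta,1/L_1\}$ --- repairs the terms $L_1xq^{-1}$ and $L_1M$ (they become $x/q$ and $M$), but with the flat-cap lemma it still leaves the contribution $\min\{\delta,1/L_1\}\cdot x/M\le \delta x/M$, which again is not bounded by $M+xq^{-1}+\delta q$ in general (take $M$ small).

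The missing idea is to make the cap in the $\min$ scale with $k$ rather than stay at the fixed value $x/M$. After the dyadic split $\ell\sim L_1$, one has $x/M\ll xL_1/k$ for $k=\ell m\sim L_1M$, and the correct tool is the hyperbola-cap form of the classical lemma, $\sum_{k\le K}\min\{X/k,\lVert\alpha k\rVert^{-1}\}\ll (X/q+K+q)\log(2Kq)$, applied with $X\asymp xL_1$, $K\asymp L_1M$; multiplying by $\min\{\delta,1/L_1\}$ then gives exactly $x/q+M+\delta q$ for every dyadic block, with no stray term. Equivalently, you can stay with the flat-cap lemma but open it up: a $k$ with $\lVert\alpha k\rVert<1/(2q)$ forces $k\ge q/2$, hence $L_1M\gg q$ and $\min\{\delta,1/L_1\}\ll M/q$, so the dangerous ``$U$-term'' contributes only $O(x/q)$, not $\delta x/M$. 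Without one of these two observations (which is precisely where the interplay between the $3/(2\ell)$ branch of $c_\ell$ and the rational approximation to $\alpha$ is actually used), the argument as proposed proves only the weaker bound $(M+xq^{-1}+\delta q+\delta x/M)x^{o(1)}$ and does not yield the lemma.
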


\begin{lemma} \label{TypeI,II-bound}
We have
\begin{equation} \label{le 1/2}
\sum_{\ell \le x}  c_{\ell}  \sum_{\substack{mn \le x \\ m \sim M }} a_{m} b_{n} e(\alpha \ell mn) \ll 
x^{1+o(1)}\left( \frac{\delta}{M}  + \frac{M}{x}  + \frac{1}{q} + \frac{q \delta }{x} \right)^{ \frac{1}{2} } 
\end{equation}
for $M \ll x^{1/2 }$ and
\begin{equation} \label{ge 1/2}
\sum_{\ell \le x}  c_{\ell}  \sum_{\substack{mn \le x \\ m \sim M }} a_{m} b_{n} e(\alpha \ell mn) \ll x^{1+o(1)} \left(\frac{M \delta}{x} + \frac{1}{M} + \frac{1}{q} + \frac{q\delta }{x} \right)^{ \frac{1}{2} } 
\end{equation}
for $M \gg x^{1/2 }$.
\end{lemma}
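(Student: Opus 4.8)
The plan is to reduce the sum to a classical bilinear exponential-sum estimate by absorbing the Fourier variable $\ell$ into whichever of $m,n$ is more convenient, following the method of \cite[Section~2.3]{H}. Since $1/(\lfloor x\rfloor+1)\le x^{-1}\le\delta$, one may work with the cruder bound $|c_\ell|\ll\min\{\delta,1/\ell\}$. First I would split the ranges of $\ell$ and $n$ into $O(\log^2 x)=x^{o(1)}$ dyadic blocks $\ell\sim L$ (with $1\le L\le x$) and $n\sim N$, and remove the constraint $mn\le x$: it is vacuous on every block with $4MN\le x$, empty on blocks with $MN>x$, and removed at cost $x^{o(1)}$ on the remaining $O(1)$ boundary blocks by partial summation. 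It then suffices to bound, for each triple $(L,M,N)$ with $MN\ll x$ and $N\ll x/M$,
$$
T(L,M,N)=\sum_{\ell\sim L}c_\ell\sum_{\substack{m\sim M\\ n\sim N}}a_m b_n\,e(\alpha\ell mn)
$$
by $x^{1+o(1)}$ times the asserted square root, and then sum over the $x^{o(1)}$ blocks.

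Next I would invoke the classical bilinear bound: for $|\alpha-a/q|\le q^{-2}$, $(a,q)=1$, coefficients $|\gamma_j|\le\Gamma\,j^{o(1)}$ and $|\psi_n|\ll n^{o(1)}$, Cauchy--Schwarz in the $j$-variable followed by expanding the square and the classical estimate $\sum_{h\le H}\min(T,\|\alpha h\|^{-1})\ll(H/q+1)(T+q)\,x^{o(1)}$ yield
$$
\sum_{j\sim J}\gamma_j\sum_{n\sim N}\psi_n\,e(\alpha jn)\ll\Gamma\,(JN)^{1/2+o(1)}\bigl(J+N+JN/q+q\bigr)^{1/2}.
$$
When $M\ll x^{1/2}$ I would apply this with $j=\ell m$: then $j$ runs over an interval of length $\asymp LM$, its coefficients $\gamma_j=\sum_{\ell m=j}c_\ell a_m$ obey $|\gamma_j|\ll\min\{\delta,1/L\}\,j^{o(1)}$ because $\sum_{m\mid j}\tau(m)\ll j^{o(1)}$, the $b_n$ remain divisor-bounded, and $JN=LMN\ll Lx$. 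When $M\gg x^{1/2}$ I would instead merge $k=\ell n$ and keep $m$ on the other side, again with the product of the two lengths being $\ll Lx$. In both cases one applies the displayed bound with $\Gamma=\min\{\delta,1/L\}$.

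What remains, and what I expect to be the only genuine work, is the bookkeeping. I would split each $L$-range according to whether $L\le1/\delta$ (so $|c_\ell|\ll\delta$) or $L>1/\delta$ (so $|c_\ell|\ll1/L<\delta$), feed the bilinear bound in, and use $MN\ll x$, $N\ll x/M$ together with the appropriate inequality on $L$; each of the four terms $J,N,JN/q,q$ then collapses to exactly one of the four terms of the claim, giving $x^{1+o(1)}(\delta/M+M/x+1/q+q\delta/x)^{1/2}$ when $M\ll x^{1/2}$ and $x^{1+o(1)}(M\delta/x+1/M+1/q+q\delta/x)^{1/2}$ when $M\gg x^{1/2}$, and summing over the $x^{o(1)}$ dyadic blocks preserves the bound. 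The obstacle lies entirely in this verification: one must make the correct choice of which variable to merge with $\ell$ (governed by the size of $M$), keep track of the crossovers at $M\asymp x^{1/2}$ and $L\asymp1/\delta$, and check that the extremal configuration is always $MN\asymp x$.
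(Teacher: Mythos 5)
Your argument is correct and is essentially the method the paper points to (Harman, \emph{Prime-Detecting Sieves}, Section 2.3): merge $\ell$ with the variable of length $\ll x^{1/2}$, apply Cauchy--Schwarz plus the classical estimate $\sum_{h\le H}\min(T,\lVert\alpha h\rVert^{-1})\ll (H/q+1)(T+q)x^{o(1)}$, and your bookkeeping with $\Gamma=\min\{\delta,1/L\}$, $JN\ll Lx$ and the crossover at $L\asymp1/\delta$ does recover exactly the four terms in both ranges of $M$. The one loose point is claiming the constraint $mn\le x$ on the $O(1)$ boundary blocks is removed ``by partial summation'': with arbitrary divisor-bounded coefficients that coupling of $m$ and $n$ is not removed by partial summation, and you should instead use a standard separation device (e.g.\ a truncated Perron integral, as in the proof of Lemma~\ref{1/4 3/4}) or simply keep the constraint and apply Cauchy--Schwarz in the unmerged pair, where it is harmless after taking absolute values --- a routine fix that does not affect the final bounds.
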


\subsection{Estimates for Type I \& II sum} \label{sec:type I, II estimate}

Denote
$$
\cB = \{n \in \mathbb{N} : 2 \le n \le x \} \mand 
  \cA = \{ n \in \cB: \lVert \alpha n + \beta \rVert < \delta \}.
$$

We will first state our Type I estimate.

\begin{lemma}[Type I estimate] \label{type one estimate}
For $q = x^{2/3}$ we have
$$
\sum_{\substack{mn \in \cA \\ m \sim M}} a_m = 2 \delta \sum_{\substack{mn \in \cB \\ m \sim M}} a_m + O(x^{3/4 +\varepsilon/2 })
$$
whenever $M \ll x^{ 3/4 }$.
\end{lemma}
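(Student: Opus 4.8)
The plan is to replace $\chi$ by its finite Fourier approximation from Lemma~\ref{expansion} and then feed the resulting exponential sum into the Type I bound of Lemma~\ref{typeI exp-bound} with the particular choice $q = x^{2/3}$.

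Start by writing
$$
\sum_{\substack{mn\in\cA\\ m\sim M}} a_m \;=\; \sum_{m\sim M} a_m \sum_{\substack{n\,:\,2\le mn\le x}} \chi(\alpha mn+\beta).
$$
Since any sequence with $|a_m|\le\tau(m)$ is a combination of at most four sequences satisfying $0\le a_m\le\tau(m)$ (the positive and negative parts of $\Re a_m$ and $\Im a_m$, each bounded by $|a_m|\le\tau(m)$) with coefficients of modulus $1$, it suffices to treat the case $0\le a_m\le\tau(m)$; this costs only a constant factor in the error, and it is exactly what makes the one-sided inequalities of Lemma~\ref{expansion} usable. Apply that lemma with $L=\lfloor x\rfloor$. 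Multiplying the two-sided bound for $\chi(\alpha mn+\beta)$ by $a_m\ge 0$, summing over $n$ with $2\le mn\le x$ and then over $m\sim M$, and observing that the constant $2\delta$ contributes exactly $2\delta\sum_{mn\in\cB,\,m\sim M} a_m$, we obtain
\begin{equation}\label{eq:planTypeI}
\Bigl| \sum_{\substack{mn\in\cA\\ m\sim M}} a_m - 2\delta\sum_{\substack{mn\in\cB\\ m\sim M}} a_m \Bigr|
\ll \frac{1}{L+1}\sum_{\substack{mn\le x\\ m\sim M}}\tau(m)
+ \max_{\pm}\,\Bigl| \sum_{0<|\ell|\le L} c_\ell^{\pm} e(\ell\beta) \sum_{\substack{mn\le x\\ m\sim M}} a_m\, e(\alpha\ell mn) \Bigr| ,
\end{equation}
where the inner sum has been enlarged to all $mn\le x$ at the cost of an error $\ll\log x$ from the term $mn=1$. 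The first term on the right of~\eqref{eq:planTypeI} is $\ll x^{o(1)}$ because $L+1>x$ and $\sum_{mn\le x,\,m\sim M}\tau(m)\ll x^{1+o(1)}$.

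For the second term, split the range into $0<\ell\le L$ and $-L\le\ell<0$; in the latter sum put $\ell=-\ell'$ and use that $a_m$ is real, so $e(-\alpha\ell' mn)=\overline{e(\alpha\ell' mn)}$. Each of the two resulting sums is, up to complex conjugation, of the form $\sum_{\ell\le x} c_\ell \sum_{mn\le x,\,m\sim M} a_m e(\alpha\ell mn)$ with $|a_m|\le\tau(m)$ and $|c_\ell|\le\min\{2\delta+\tfrac{1}{\lfloor x\rfloor+1},\tfrac{3}{2\ell}\}$ (the twist by $e(\pm\ell\beta)$ has modulus one; and since $L=\lfloor x\rfloor$, the condition $\ell\le L$ coincides with $\ell\le x$ for integers $\ell$, so the two coefficient bounds are compatible). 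Hence Lemma~\ref{typeI exp-bound} applies to each piece and gives a bound $\ll (M+xq^{-1}+\delta q)x^{o(1)}$. With $q=x^{2/3}$ and $\delta=x^{-1/4+\varepsilon}$ we have $xq^{-1}=x^{1/3}$ and $\delta q=x^{5/12+\varepsilon}$, so for $M\ll x^{3/4}$ this is $\ll (x^{3/4}+x^{1/3}+x^{5/12+\varepsilon})x^{o(1)}\ll x^{3/4+\varepsilon/2}$ for all sufficiently large $x$ (using $5/12+\varepsilon<3/4+\varepsilon/2$, valid since $\varepsilon<2/3$, to absorb the $x^{o(1)}$ into $x^{\varepsilon/2}$). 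Substituting into~\eqref{eq:planTypeI} gives the lemma.

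I do not expect a genuine obstacle here: this is bookkeeping on top of Lemmas~\ref{expansion} and~\ref{typeI exp-bound}. The two points that require a little care are (i) that the Fourier approximation comes as a pair of one-sided inequalities, which forces the preliminary reduction to nonnegative coefficients $a_m$ before the $m$-summation can be moved inside the $\ell$-sum, and (ii) the numerology: checking that the term $\delta q=x^{5/12+\varepsilon}$ produced by the choice $q=x^{2/3}$ is dominated by $x^{3/4+\varepsilon/2}$ in the relevant range, with the hypothesis $M\ll x^{3/4}$ controlling the $M$-term and $xq^{-1}=x^{1/3}$ being harmless.
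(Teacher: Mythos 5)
Your proposal is correct and follows essentially the same route as the paper: apply Lemma~\ref{expansion} with $L=\lfloor x\rfloor$, so the $2\delta$ term produces the main term, the $1/(L+1)$ term is $\ll x^{o(1)}$, and the remaining exponential sum is bounded by Lemma~\ref{typeI exp-bound} with $q=x^{2/3}$, giving $(M+xq^{-1}+\delta q)x^{o(1)}\ll x^{3/4+\varepsilon/2}$ for $M\ll x^{3/4}$. Your preliminary reduction to nonnegative $a_m$ is a small extra precaution (the paper instead absorbs signs and the $e(\ell\beta)$ twist into unimodular factors $\xi_\ell$), but it does not change the argument.
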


\begin{proof}
By Lemma~\ref{expansion} with $L = \lfloor  x \rfloor$, we get 
$$
\sum_{\substack{mn \in \cA \\ m \sim M}} a_m = \sum_{\substack{mn \in \cB \\ m \sim M}} a_m \chi(\alpha mn + \beta ) = 2\delta \sum_{\substack{mn \in \cB \\ m \sim M}} a_m +O(E_1 + E_2)
$$
where
\begin{align*}
E_1 & = \frac{1}{L+1} \sum_{\substack{mn \le x \\ m \sim M}} a_m  \ll x^{o(1)}, \\
E_2 & = \sum_{0 < |\ell| \le L} |c_{\ell}^{\pm}| \cdot \Bigg |\sum_{\substack{mn \le x \\ m \sim m}} a_m e(\ell(\alpha mn + \beta)) \Bigg |.
\end{align*}
Clearly there exists $\xi_{\ell} \in \mathbb{C}$ with $|\xi_{\ell}|=1$ such that
\begin{align*}
E_{2} 
& = \sum_{0 < |\ell| \le L} \xi_{\ell}c_{\ell}^{\pm} \sum_{\substack{mn \le x \\ m \sim M  }} a_{m} e( \alpha \ell  mn). 
\end{align*}
 Applying Lemma~\ref{typeI exp-bound} immediately gives the bound
$$
E_2 \ll (M + xq^{-1} + \delta q)x^{o(1)}  \ll x^{ \frac{3}{4} + \frac{\varepsilon}{2} }
$$
whenever $M \ll x^{3/4 }$.
\end{proof}

Next, we state our Type II estimate.

\begin{lemma}[Type II estimate] \label{type two estimate}
For $q = x^{ 2/3 }$ we have
$$
\sum_{\substack{mn \in \cA \\ x^{\gamma} \le m \le x^{\gamma+\tau} }} a_m b_n = 2\delta \sum_{\substack{mn \in \cB \\ x^{\gamma} \le m \le x^{\gamma+\tau}}} a_m b_n + O(x^{ 3/4 + \varepsilon/2 + o(1) })
$$
uniformly for $1/4 \le \gamma$, $\gamma +\tau \le 3/4$.
\end{lemma}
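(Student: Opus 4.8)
The plan is to follow the proof of the Type~I estimate (Lemma~\ref{type one estimate}) essentially verbatim, with Lemma~\ref{TypeI,II-bound} taking the place of Lemma~\ref{typeI exp-bound}. First I would apply Lemma~\ref{expansion} with $L=\lfloor x\rfloor$ to the weight $\chi(\alpha mn+\beta)$ attached to each $mn\in\cB$ with $x^{\gamma}\le m\le x^{\gamma+\tau}$; this produces the main term $2\delta\sum_{mn\in\cB,\, x^{\gamma}\le m\le x^{\gamma+\tau}}a_mb_n$ together with two error terms,
$$
E_1=\frac{1}{L+1}\sum_{\substack{mn\le x\\ x^{\gamma}\le m\le x^{\gamma+\tau}}}|a_mb_n|\quad\text{and}\quad
E_2=\sum_{0<|\ell|\le L}|c_\ell^{\pm}|\,\Bigl|\sum_{\substack{mn\le x\\ x^{\gamma}\le m\le x^{\gamma+\tau}}}a_mb_n\,e\bigl(\ell(\alpha mn+\beta)\bigr)\Bigr|.
$$
The bound $|a_mb_n|\le\tau(m)\tau(n)\ll x^{o(1)}$ gives $E_1\ll x^{o(1)}$ at once, so everything reduces to estimating $E_2$.

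For $E_2$ I would proceed exactly as in the Type~I proof: the factor $e(\ell\beta)$ together with a unimodular $\xi_\ell$ removing the absolute value can be folded into the coefficient, and the resulting sequence still satisfies $|c_\ell|\le\min\{2\delta+(\lfloor x\rfloor+1)^{-1},\,3/(2\ell)\}$, so
$$
E_2=\sum_{0<|\ell|\le L}\xi_\ell c_\ell^{\pm}\sum_{\substack{mn\le x\\ x^{\gamma}\le m\le x^{\gamma+\tau}}}a_mb_n\,e(\alpha\ell mn)
$$
(negative $\ell$ being handled in the same way after replacing $\alpha$ by $-\alpha$). Now decompose $x^{\gamma}\le m\le x^{\gamma+\tau}$ into $O(\log x)$ dyadic blocks $m\sim M$; on the two extreme blocks the interval is truncated, but absorbing that truncation into $a_m$ preserves $|a_m|\le\tau(m)$, so Lemma~\ref{TypeI,II-bound} applies to each block, via~(\ref{le 1/2}) when $M\ll x^{1/2}$ and via~(\ref{ge 1/2}) when $M\gg x^{1/2}$, taking $q=x^{2/3}$.

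It remains to feed in $q=x^{2/3}$, $\delta=x^{-1/4+\varepsilon}$ and the constraints $x^{1/4}\le x^{\gamma}\le M<x^{\gamma+\tau}\le x^{3/4}$. In the range $M\ll x^{1/2}$ the four terms of~(\ref{le 1/2}) are $\delta/M\le x^{-1/2+\varepsilon}$ (here $\gamma\ge1/4$ is used), $M/x\ll x^{-1/2}$, $q^{-1}=x^{-2/3}$ and $q\delta/x=x^{-7/12+\varepsilon}$, whose square roots multiplied by $x^{1+o(1)}$ are all $\ll x^{3/4+\varepsilon/2+o(1)}$; in the range $M\gg x^{1/2}$ the terms of~(\ref{ge 1/2}) are $M\delta/x\le x^{-1/2+\varepsilon}$ (here $\gamma+\tau\le3/4$ is used), $M^{-1}\ll x^{-1/2}$, and the same $q^{-1}$ and $q\delta/x$, again giving $\ll x^{3/4+\varepsilon/2+o(1)}$ after multiplying by $x^{1+o(1)}$. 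Summing over the $O(\log x)$ blocks costs only $x^{o(1)}$, so $E_1+E_2\ll x^{3/4+\varepsilon/2+o(1)}$, which is the claim; uniformity in $\gamma,\tau$ is automatic since the estimates used only $x^{1/4}\le M\le x^{3/4}$.

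There is no serious obstacle once Lemma~\ref{TypeI,II-bound} is granted — that lemma carries all the analytic content. The only points needing care are bookkeeping: making the case split at $M\asymp x^{1/2}$ cleanly, observing that the hypotheses $\gamma\ge1/4$ and $\gamma+\tau\le3/4$ are exactly what force the otherwise-dominant terms $\delta/M$ and $M\delta/x$ down to size $x^{-1/2+\varepsilon}$ (so these constraints are sharp for the method), and handling the incomplete dyadic blocks at the endpoints of $[x^{\gamma},x^{\gamma+\tau}]$ by truncating $a_m$ rather than invoking an extra partial-summation step.
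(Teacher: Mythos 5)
Your proposal is correct and matches the paper's proof, which is exactly this: run the argument of Lemma~\ref{type one estimate} with the coefficients $a_mb_n$, split the range of $m$ into dyadic blocks, and apply Lemma~\ref{TypeI,II-bound} (using~(\ref{le 1/2}) or~(\ref{ge 1/2}) according as $M\ll x^{1/2}$ or $M\gg x^{1/2}$) with $q=x^{2/3}$, $\delta=x^{-1/4+\varepsilon}$. Your exponent bookkeeping, showing that $\gamma\ge 1/4$ and $\gamma+\tau\le 3/4$ force every term to size at most $x^{-1/2+\varepsilon}$ inside the square root, is exactly the verification the paper leaves implicit.
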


\begin{proof}
We follow the method of Lemma~\ref{type one estimate}. Partition the summation over $m$ into dyadic intervals and apply Lemma~\ref{TypeI,II-bound}.
\end{proof}

\subsection{Sieve estimates}

For positive real numbers $w, u, v$, we denote
$$
P(w) = \prod_{p < w}p \mand P(u,v] = \prod_{u < p\le v} p.
$$
We also set 
$$
Y=x^{ 3/4+ \varepsilon/2 + o(1)}.
$$

For any set $\cA \subseteq [2,x]$ of integers and any positive integer $s$, we denote
$$
\cA_s = \{ n : ns \in \cA  \}.
$$
For real numbers $0<y\le z \le x$, we denote
$$
\cS(\cA;y,z) = \# \{  a \in \cA : p|a \implies p \in [y,z]     \}
$$
and in the special case $z=x$, we denote
$$
\cS(\cA;y,z) = \cS( \cA; y).
$$
We state a variant of the Buchstab identity for $[y,z]$-smooth numbers which is based on taking out the largest prime factor of the integers we are counting.

\begin{lemma} \label{lem:Buchstab}
For any $2 \le  y \le z$ and any set $\cA \subseteq [2,x] $ of integers, we have
$$
\cS(\cA;y,z) = \sum_{y \le p \le z} \cS(\cA_p;y,p) + \cS(\cC; x^{1/2}) +O(x^{ 1/2 })
$$
where $\cC =\{a \in \cA: y \le a \le z \}$.
\end{lemma}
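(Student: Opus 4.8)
The plan is to prove this by decomposing the set of $[y,z]$-smooth numbers in $\cA$ according to whether or not each such number has a prime factor exceeding a suitable threshold, taking out the \emph{largest} prime factor. Fix $2 \le y \le z$ and let $a \in \cA$ be counted by $\cS(\cA;y,z)$, so every prime factor of $a$ lies in $[y,z]$. Either $a$ has a prime factor $p > x^{1/2}$, or not. In the first case, write $a = p b$ with $p = P^+(a)$; then $p \in [y,z]$, we must have $y \le p \le z$, and since $p > x^{1/2}$ and $a \le x$ we get $b < x^{1/2} < p$, so every prime factor of $b$ lies in $[y,p]$ and $b$ is counted by $\cS(\cA_p; y, p)$. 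This gives the contribution $\sum_{y \le p \le z} \cS(\cA_p; y, p)$, but I must be careful about overcounting: a number $a$ with two distinct prime factors both exceeding $x^{1/2}$ would have product exceeding $x$, impossible, so each such $a$ has a \emph{unique} largest prime factor $>x^{1/2}$ and is counted exactly once in the sum over $p$ — no inclusion-exclusion is needed.

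Next I would handle the remaining case: $a$ is $[y,z]$-smooth with $P^+(a) \le x^{1/2}$, i.e.\ $a$ is counted by $\cS(\cA; y, z)$ but has no prime factor exceeding $x^{1/2}$. The claim is that this count equals $\cS(\cC; x^{1/2})$ up to an error $O(x^{1/2})$, where $\cC = \{a \in \cA : y \le a \le z\}$. Here I expect the argument to go: the sum $\sum_{y \le p \le z}\cS(\cA_p;y,p)$ combined with the full $\cS(\cA;y,z)$ should, via the usual Buchstab recursion, collapse so that what remains is the count of $a \in \cA$ that are $x^{1/2}$-smooth (all prime factors $< x^{1/2}$) together with the boundary terms where $a$ itself lies in $[y,z]$ as a ``trivial'' smooth number. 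The point of introducing $\cC$ is that once $p$ ranges only up to $z$, an integer $a \le z$ all of whose prime factors lie in $[y,z]$ with $P^+(a) \le x^{1/2}$ is essentially just constrained to be $x^{1/2}$-smooth and lie in the interval $[y,z]$; the discrepancy between "$x^{1/2}$-smooth and in $[y,z]$" and the exact set we want (smooth numbers with prime factors in $[y,z]$, no large prime factor) is controlled because any mismatch forces $a$ to be either $\le z$ with a prime factor $< y$ — but then it wouldn't be $[y,z]$-smooth — or to involve edge effects near $p = x^{1/2}$.

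The $O(x^{1/2})$ error term is where the real bookkeeping lives, and that is the step I expect to be the main obstacle to state cleanly. It should account for: (i) integers $a \in \cA$ with $y \le a \le z$ that are prime or prime powers sitting exactly on the boundary $a \sim x^{1/2}$ (there are $O(x^{1/2})$ integers up to $z$ that are $\le x^{1/2}$, trivially, and in fact the subtle boundary cases number $O(x^{1/2})$); (ii) the distinction between $\cS(\cC;x^{1/2})$, which counts $a \in \cA \cap [y,z]$ with \emph{all} prime factors $< x^{1/2}$, versus the slightly different set arising naturally from the Buchstab step. I would bound all of these crudely: the total number of integers in any of the exceptional categories is at most the number of integers $\le x$ with some prescribed small factor or in a short range, which is $O(x^{1/2})$, and this is absorbed since $x^{1/2}$ is negligible compared to the target error $Y = x^{3/4 + \varepsilon/2 + o(1)}$. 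So the proof is: split off the largest prime factor when it exceeds $x^{1/2}$ (exact, no overcounting by the size constraint), identify the leftover with $\cS(\cC;x^{1/2})$, and dump the $O(x^{1/2})$ of boundary and edge discrepancies into the error term.
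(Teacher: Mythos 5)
There is a genuine gap, and it starts with a misreading of the notation. In this paper $\cS(\cC;x^{1/2})$ means $\cS(\cC;x^{1/2},x)$, i.e.\ the number of $c\in\cC=\cA\cap[y,z]$ having \emph{no} prime factor below $x^{1/2}$ (equivalently $(c,P(x^{1/2}))=1$); up to $O(1)$ exceptional elements (a composite $c\le x$ free of prime factors $<x^{1/2}$ must be a product of two primes each $\ge x^{1/2}$, forcing $c$ essentially equal to $x$), this is just the number of \emph{primes} in $\cA\cap[y,z]$ of size at least $x^{1/2}$. You instead read it as the count of $x^{1/2}$-\emph{smooth} elements of $\cC$ (``all prime factors $<x^{1/2}$''), which is the opposite set, and your second and third paragraphs are built on that reading, so they cannot be repaired as written.

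This misreading is coupled to a wrong decomposition. You split according to whether $P^+(a)>x^{1/2}$ and claim the large-$P^+$ elements account for $\sum_{y\le p\le z}\cS(\cA_p;y,p)$. But that sum runs over \emph{all} primes $p\in[y,z]$: every composite $[y,z]$-smooth $a\in\cA$ is counted in it via $p=P^+(a)$, regardless of whether $P^+(a)$ exceeds $x^{1/2}$, so identifying the sum with only the large-$P^+$ elements is false; and the leftover in your split (the $x^{1/2}$-smooth, $[y,z]$-smooth elements of $\cA$, which in general number far more than $x^{1/2}$, e.g.\ when $y$ is small) is not within $O(x^{1/2})$ of $\cS(\cC;x^{1/2})$ under either reading. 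The paper's proof uses the different and correct dichotomy of prime versus composite: if $a$ is composite, write $a=P^+(a)\,n$ with $n>1$ and all prime factors of $n$ in $[y,P^+(a)]$, which matches the sum over $p$ exactly (one term per $a$, no threshold needed); if $a$ is prime, it lies in $\cA\cap[y,z]$, and the count of such primes equals $\cS(\cC;x^{1/2})$ up to $O(x^{1/2})$ for the primes below $x^{1/2}$ plus the $O(1)$ exceptional composites noted above. Finally, you explicitly defer the ``real bookkeeping'' for the error term, so even on its own terms your argument is not closed.
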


\begin{proof}
Take $a \in \{ a \in \cA : p|a \implies p \in [y,z] \}$. If $a$ is a prime then $a$ is counted in $\{a \in \cA: y \le a \le z , a \mbox{ is prime} \}$. Otherwise $a$ has at least two prime factors and we can write $a = P^+(a)n$ where $n>1$ and the prime factors of $n$ lie in the interval $[y,P^+(a)]$; the result follows immediately.
\end{proof}

Next we state three lemmas which give sieve estimates for different regions. In particular, the proofs will rely on ingredients coming from the Harman sieve~\cite[Lemma 2]{H2}.

Our first sieve estimate is essentially based on an application of our Type II estimate (Lemma~\ref{type two estimate}).

\begin{lemma} \label{1/4 3/4}
For $x^{ 1/4 } \le y < z \le x^{ 3/4 }$, we have
\begin{equation*} 
\sum_{y  \le p \le z} \cS (\cA_{p}; y,p)  = 2\delta \sum_{y  \le p \le z} \cS (\cB_{p}; y,p) +  O(Y).
\end{equation*}
\end{lemma}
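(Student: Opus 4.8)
The plan is to reduce the sum $\sum_{y\le p\le z}\cS(\cA_p;y,p)$ to a bilinear (Type~II) expression that Lemma~\ref{type two estimate} can handle directly. Write each integer counted in $\cS(\cA_p;y,p)$ as $n=pm$ where $p$ ranges over primes in $[y,z]$ and $m$ is a $[y,p]$-smooth integer with $pm\le x$. Thus
\[
\sum_{y\le p\le z}\cS(\cA_p;y,p)=\sum_{\substack{pm\in\cA\\ y\le p\le z\\ P^+(m)\le p}}1
=\sum_{\substack{pm\in\cA\\ y\le p\le z}} a_p b_m,
\]
where $a_p=\mathbbm 1_{p\text{ prime}}$ (so $|a_p|\le\tau(p)$) and $b_m=\mathbbm 1_{P^+(m)\le p,\,P^-(m)\ge y}$ — but the latter depends on $p$, so the factorization is not literally of the form needed by Lemma~\ref{type two estimate}. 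The standard device is to remove this coupling by sorting on the size of $p$: partition $[y,z]$ into $O(\log x)$ dyadic blocks $p\sim P$ with $x^{1/4}\ll P\ll x^{3/4}$, and within each block further split the $p$-range into $O(x^{o(1)})$ subintervals $[x^\gamma,x^{\gamma+\tau}]$ short enough that the smoothness constraint $P^+(m)\le p$ may be replaced, up to an acceptable error, by the $p$-independent constraint $P^+(m)\le x^{\gamma}$ (the discrepancy is absorbed by shrinking $\tau$ and summing the $O(x^{o(1)})$ pieces). On each such piece the summand genuinely factors as $a_p b_m$ with both coefficients bounded by the divisor function, $x^\gamma\le p\le x^{\gamma+\tau}$ and $1/4\le\gamma$, $\gamma+\tau\le 3/4$, so Lemma~\ref{type two estimate} applies verbatim and yields, piece by piece,
\[
\sum_{\substack{pm\in\cA\\ x^\gamma\le p\le x^{\gamma+\tau}}}a_p b_m
=2\delta\sum_{\substack{pm\in\cB\\ x^\gamma\le p\le x^{\gamma+\tau}}}a_p b_m
+O(x^{3/4+\varepsilon/2+o(1)}).
\]

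Summing over the $O(x^{o(1)})$ subintervals and the $O(\log x)$ dyadic blocks reassembles the left-hand side, the main terms recombine into $2\delta\sum_{y\le p\le z}\cS(\cB_p;y,p)$, and the total error is $O(x^{o(1)})\cdot O(x^{3/4+\varepsilon/2+o(1)})=O(x^{3/4+\varepsilon/2+o(1)})=O(Y)$ by the definition of $Y$.

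The main obstacle is the bookkeeping around decoupling the smoothness condition $P^+(m)\le p$ from the summation variable $p$: one must choose the fineness $\tau$ of the subdivision so that, on the one hand, replacing $p$ by the endpoint $x^\gamma$ in the constraint $P^+(m)\le p$ costs only integers $m$ having a prime factor in the very short range $[x^\gamma,x^{\gamma+\tau}]$ — a set whose contribution is $O(\tau x^{1+o(1)})$ and hence negligible after one picks $\tau$ appropriately (e.g. a small power of $\log$ or $x^{o(1)}$) — while, on the other hand, keeping the number of subintervals at $x^{o(1)}$ so the accumulated error stays within $O(Y)$. This is exactly the kind of Harman-sieve decomposition alluded to via~\cite[Lemma 2]{H2}, and once the intervals are fixed the rest is a mechanical application of Lemma~\ref{type two estimate}.
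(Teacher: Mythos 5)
Your reduction stalls at precisely the step you flag as "bookkeeping": the decoupling of the condition $P^+(m)\le p$ from $p$ cannot be achieved by subdividing the $p$-range, because the discarded terms are too numerous for the error target. When you replace $P^+(m)\le p$ by $P^+(m)\le x^{\gamma}$ on a block $p\in[x^{\gamma},x^{\gamma+\tau}]$, the discrepancy consists of integers $n=pm\in\cA$ having \emph{two} prime factors in that short block. On the $\cA$ side you have no way to count these except trivially by their number among \emph{all} integers up to $x$, which over all blocks is of size about $\tau x^{1+o(1)}$; with $\tau$ of size $x^{-o(1)}$ (needed to keep only $x^{o(1)}$ blocks) this is $x^{1-o(1)}$, which dwarfs both the permitted error $Y=x^{3/4+\varepsilon/2+o(1)}$ and the main term $2\delta\sum_p\cS(\cB_p;y,p)\ll x^{3/4+\varepsilon}$. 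Shrinking $\tau$ to make the trivial discrepancy bound acceptable forces $\tau\ll x^{-1/4}$, hence $\gg x^{1/4}$ blocks, and the accumulated Type~II errors $\gg x^{1/4}\cdot x^{3/4+\varepsilon/2}$ then exceed even the main term; even the more favourable accounting in which the $\cB$-side discrepancy is multiplied by $2\delta$ (which you are not entitled to on the $\cA$ side without further arithmetic input) still needs $\tau\le x^{-\varepsilon/2}$ and so at least $x^{\varepsilon/2}$ blocks, giving a total error $x^{3/4+\varepsilon+o(1)}$, larger than $O(Y)$. So the proposal as written does not close. Note also that Lemma~\ref{type two estimate} already holds uniformly for $1/4\le\gamma$, $\gamma+\tau\le 3/4$ in one application; the only genuine obstacle is the coupling, which the splitting does not remove.

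The paper resolves the coupling analytically rather than combinatorially. After writing the smoothness condition via M\"obius as $\sum_{d_1d_2\mid n,\,d_2\mid P(y),\,P^-(d_1)>m}\mu(d_1d_2)$, it detects the cross condition $P^-(d_1)>m$ with the truncated Perron formula, taking $\rho=\log\bigl(P^-(d_1)-\tfrac12\bigr)$, $\gamma=\log m$ and $T=x^{2}\delta^{-1}$. This represents the coupled indicator as an integral over $t$ of \emph{separable} factors $m^{it}$ and $\sin\bigl(t\log(P^-(d_1)-\tfrac12)\bigr)$ against $dt/t$, at the cost of a remainder $O(\delta x^{o(1)})$ and a factor $\int dt/t\ll\log T=x^{o(1)}$; Lemma~\ref{type two estimate} is then applied inside the integral, and a second application of Perron reassembles the $\cB$-side main term, giving the stated $2\delta\sum_{y\le p\le z}\cS(\cB_p;y,p)+O(Y)$. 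If you want to salvage your argument, this analytic separation (Perron, or an equivalent Mellin/Fourier device) is the missing ingredient; interval subdivision alone cannot meet the error term claimed in the lemma.
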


\begin{proof}
We have
\begin{align*}
 \sum_{y  \le p \le z} \cS (\cA_{p}; y,p)  
& = \sum_{y \le  p \le z } \sum_{\substack{np \in \cA \\ (n,P(y)P(p,x])=1}} 1  = \sum_{ \substack{mn \in \cA \\ y \le m \le z }}     a(m,n).
\end{align*}
Here
\begin{align*}
a(m,n) & =   \mathbbm{1}_{\mathbb{P}}(m) \sum_{(n,P(y)P(m,x])=1} 1  =    \mathbbm{1}_{\mathbb{P}}(m) \sum_{\substack{d|n \\ d|P(y)P(m,x]}} \mu(d)
\end{align*}
where $\mathbbm{1}_{\mathbb{P}}(\cdot)$ is the characteristic function for the primes. We may assume that $d$ is square-free, as otherwise it does not contribute to the sum $a(m,n)$. 

Write
$$
a(m,n) =  \mathbbm{1}_{\mathbb{P}}(m) \sum_{\substack{d_1 d_2|n \\ d_2|P(y) \\ P^-(d_1) > m}} \mu(d_1 d_2).
$$
We may assume $d_1 >1$; the case $d_1=1$ follows immediately.
We now get rid of the condition $P^-(d_1) > m$ by appealing to the truncated Perron formula 
$$
\frac{1}{\pi} \int_{-T}^{T} e^{i \gamma t} \frac{\sin \rho t}{t} dt =
\begin{cases}
1 + O(T^{-1}(\rho - |\gamma|)^{-1} ) &  \mbox{ if $|\gamma | \le \rho$,}\\
O(T^{-1}(|\gamma| - \rho)^{-1} ) & \mbox{ if $|\gamma| > \rho$}.
\end{cases}
$$
Applying the above formula with $\rho = \log (P^-(d_1) -\frac{1}{2})$, $\gamma = \log m$ and $T = x^2 \delta^{-1}$, we have
\begin{align*}
 \sum_{ \substack{mn \in \cA \\ y \le m \le z }}    a(m,n) = M(\cA) + O(R).
\end{align*}
Here
\begin{align*}
M (\cA) & =  \frac{1}{\pi} \int_{-T}^{T} \sum_{\substack{mn \in \cA \\ y \le m \le z}} \mathbbm{1}_{\mathbb{P}}(m) e^{i\gamma t} \Bigg (\sum_{\substack{d_1 d_2|n \\ d_2|P(y) }} \mu(d_1 d_2) \sin  ( \rho t ) \Bigg ) \frac{dt}{t}\\
R & =T^{-1} \sum_{\substack{mn \in \cA \\ y \le m \le z}} \mathbbm{1}_{\mathbb{P}}(m) \sum_{\substack{d_1 d_2|n \\ d_2|P(y) }}  \frac{\mu(d_1d_2)}{|\log (P^-(d)- \frac{1}{2}) - \log m|}.
\end{align*}
We will consider the remainder term first. By the mean value theorem,
$$
\frac{1}{\log(P^-(d)-\frac{1}{2}) - \log m} = \frac{\eta}{P^-(d)-\frac{1}{2}-m} 
$$
where $\eta \in [m,P^-(d)-\frac{1}{2}]$ or $[P^-(d)-\frac{1}{2},m]$. In any case, we have $\eta \le \max \{m, n- \frac{1}{2} \}$, so we bound
$$
\frac{\eta}{P^-(d) -\frac{1}{2}-m} \ll \max \{ m,n  \} \le x.
$$ Therefore $R \ll T^{-1}x^{2+o(1)}\ll \delta x^{o(1)}$. 

It remains to estimate the main term. Note that the integral in the main term between $-1/T$ and $1/T$ can be trivially bounded by $\ll T^{-1} x^{1+o(1)} \ll \delta x^{-1}$. Applying our Type II estimate (Lemma~\ref{type two estimate}) in the integral over $\mathfrak{R}(T) = (-T,-1/T) \cup (1/T, T)$, we get
\begin{align*}
& M(\cA) \\
 & =  \frac{2\delta}{\pi}  \int_{\mathfrak{R}(T)}  \sum_{\substack{mn \in \cB \\ y \le m \le z}} \mathbbm{1}_{\mathbb{P}}(m) m^{it} \sum_{\substack{d_1 d_2|n \\ d_2|P(y) }} \mu(d_1 d_2) \sin \Big (t \log \Big (P^-(d_1)-\frac{1}{2} \Big ) \Big )  \frac{dt}{t}\\
&  \quad + O \Big (Y \int_{\mathfrak{R}(T)} \frac{dt}{t}  \Big )   + O(\delta x^{-1})\\
& = \frac{2\delta}{\pi} \int_{-T}^{T}  \sum_{\substack{mn \in \cB \\ y \le m \le z}} \mathbbm{1}_{\mathbb{P}}(m) m^{it} \sum_{\substack{d_1 d_2|n \\ d_2|P(y)}} \mu(d_1 d_2) \sin \Big (t \log \Big (P^-(d_1)-\frac{1}{2} \Big ) \Big )  \frac{dt}{t}\\
&  \quad + O(Y) \\
&  = 2\delta M(\cB)+ O(Y)
\end{align*}
where the last line follows from the truncated Perron formula once again. The result follows immediately.
\end{proof}

The next sieve estimate is based on an idea which dates back to Vinogradov who applied it to estimate sum over primes. The idea is to systematically take out the largest prime factor until we get sums which we can estimate by our Type II estimate (Lemma~\ref{type two estimate}).

\begin{lemma}   \label{1/4}
For $2 \le y <z < x^{1/4 }$, we have
\begin{equation*} 
\sum_{ y \le p \le z} \cS (\cA_{p}; y,p) =2\delta \sum_{ y \le p \le z} \cS (\cB_{p}; y,p) + O(Y).
\end{equation*}
\end{lemma}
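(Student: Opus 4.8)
The plan is to iterate the largest-prime-factor decomposition (the "Vinogradov trick") until every remaining sum can be handled by the Type~II estimate of Lemma~\ref{type two estimate}. First I would dispose of the easy range: if $p \ge x^{1/4}$ cannot occur here since $z < x^{1/4}$, but the variable of true concern is the complementary factor $n$ with $np \in \cA$. So I would write $\cS(\cA_p; y, p)$ as a sum over $np \in \cA$ of an indicator that the prime factors of $n$ lie in $[y, p]$, and then peel off $P^+(n)$. Writing $n = P^+(n) n'$, the pair $(m, n'') = (p \cdot P^+(n), n')$ (or a suitable grouping of the two small primes $p$ and $P^+(n)$ into one "Type~II variable") produces a bilinear sum. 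The point is that $p, P^+(n) \in [y, z]$ with $z < x^{1/4}$, so their product lies below $x^{1/2}$, and iterating we can always arrange a decomposition $mn \in \cA$ with $x^{1/4} \le m \le x^{1/4 + \tau}$ for a small $\tau$ (depending on the size of the smallest prime $y$ we are allowed to multiply in) — exactly the shape required by Lemma~\ref{type two estimate}.

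More precisely, I would argue as follows. Consider an integer $n$ counted by $\cS(\cA_p; y, p)$, so $np \in \cA$ and $n$ is $[y, p]$-smooth. Repeatedly extract the largest prime factor: after extracting primes $p \ge q_1 \ge q_2 \ge \cdots$ we have a product $p q_1 \cdots q_j$ of primes all in $[y, z]$, times a cofactor that is $[y, q_j]$-smooth. Since each prime is $< x^{1/4}$ and $\ge y \ge 2$, the partial products $p q_1 \cdots q_j$ climb through the real line in steps of multiplicative size at most $z < x^{1/4}$; hence there is a first index at which the product enters $[x^{1/4}, x^{1/2})$, and in fact it lands in $[x^{1/4}, x^{1/4} \cdot z) \subseteq [x^{1/4}, x^{1/2})$. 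Declaring this product to be $m$ and the remaining cofactor to be $n$ gives a decomposition $mn \in \cA$ with $x^{1/4} \le m < x^{1/2}$, so $\gamma = 1/4$ and $\gamma + \tau \le 1/2 \le 3/4$, and the coefficients $a_m$, $b_n$ satisfy the required divisor bounds (they are bounded by $\tau(m)$, $\tau(n)$ because each arises from a bounded number of choices in the extraction, controlled by the divisor function). Summing the Buchstab/extraction identities, the left side of the asserted formula equals a bounded (in fact $x^{o(1)}$) combination of Type~II sums of the form treated by Lemma~\ref{type two estimate}, plus negligible $O(x^{1/2})$-type error from the prime-power and boundary cases as in Lemma~\ref{lem:Buchstab}. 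Each Type~II sum contributes its main term $2\delta$ times the corresponding sum over $\cB$, and reassembling the identities in reverse recovers $2\delta \sum_{y \le p \le z} \cS(\cB_p; y, p)$. The total error is $O(x^{o(1)} \cdot Y) = O(Y)$ after absorbing the $x^{o(1)}$ into the $o(1)$ in the exponent of $Y$.

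The main obstacle I anticipate is bookkeeping rather than a genuine analytic difficulty: one must organize the iterated extraction so that (i) the decomposition $mn$ produced is well-defined and the resulting coefficients genuinely obey $|a_m| \le \tau(m)$, $|b_n| \le \tau(n)$ after collecting all the ways a given $(m,n)$ can arise, and (ii) the number of iterations — hence the number of Type~II applications summed — stays $x^{o(1)}$, which it does because each step removes a prime of size $\ge y \ge 2$, so there are $O(\log x)$ steps and the combinatorial weights are divisor-bounded. A secondary point is the transition at $m$ exactly near $x^{1/4}$: the construction must guarantee $m \ge x^{1/4}$ on the nose, which is why one extracts primes \emph{until} crossing $x^{1/4}$ and uses that the last prime multiplied in is $< x^{1/4}$ so the overshoot is controlled, keeping $m < x^{1/2}$ and thus inside the admissible window of Lemma~\ref{type two estimate}. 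Once this is set up, the passage from $\cA$ to $\cB$ is immediate from that lemma, exactly as in the proof of Lemma~\ref{1/4 3/4}.
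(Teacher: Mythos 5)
Your proposal follows essentially the same route as the paper: iterated extraction of the largest prime factor (as in Lemma~\ref{lem:Buchstab}) until the product of extracted primes first exceeds $x^{1/4}$ --- and hence, since every prime involved is $<x^{1/4}$, lies in $[x^{1/4},x^{1/2})$ --- at which point the Type~II machinery is applied, with $O(\log x)$ iterations and divisor-bounded coefficients. The one point you gloss over is that the resulting sums are not literally bilinear, because the smoothness constraint on the remaining cofactor depends on the smallest extracted prime; this cross-condition is exactly what the truncated-Perron separation in the proof of Lemma~\ref{1/4 3/4} removes, and your closing appeal to that proof is what supplies this step, just as in the paper.
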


\begin{proof}
By the method of Lemma~\ref{lem:Buchstab} we have
\begin{align*}
\sum_{y \le p \le z} \cS(\cA_p; y,p) & = \sum_{y \le p_1 \le p \le z} \cS(\cA_{pq_1}; y, p_1) +O( x^{1/2} ). 
\end{align*}
We will first consider the sum on the right with $p p_1 >x^{1/4}$. Clearly we have
$$
\sum_{\substack{y \le p_1 \le p \le z \\ pp_1 >x^{1/4} }} \cS(\cA_{pp_1}; y, p_1) = \sum_{\substack{y \le p_1 \le p \le z \\ pp_1 >x^{1/4 }}} \sum_{\substack{npp_1 \in \cA \\ (n, P(y)P(p_1,x] )=1 }} 1.
$$
Set $m = pp_1$ and note that $m < x^{1/2}$. It follows that
$$
\sum_{\substack{y \le p_1 \le p \le z \\ pp_1 >x^{1/4} }} \cS(\cA_{pp_1}; y, p_1) = \sum_{\substack{mn \in \cA \\ x^{1/4} < m < x^{1/2} }} a_m \sum_{\substack{d_1d_2|n \\ d_1|P(y) \\ P^-(d_2) > P^-(m)}} \mu(d_1 d_2)
$$
where $a_m$ is 1 if $m=pp_1$ with $p,p_1 \in [y,z]$ and 0 otherwise. Applying the method of Lemma~\ref{1/4 3/4}, we get
$$
\sum_{\substack{y \le p_1 \le p \le z \\ pp_1 >x^{1/4} }} \cS(\cA_{pp_1}; y, p_1) = 2\delta \sum_{\substack{y \le p_1 \le p \le z \\ pp_1 >x^{1/4} }} \cS(\cB_{pp_1}; y, p_1) +O( Y ).
$$
The only part left to consider is the sum
$$
\sum_{ \substack{ y \le p_1 \le p \le z \\ pp_1 \le x^{1/4} }} \cS(\cA_{pp_1}; y, p_1).
$$
If it is zero then we are done, otherwise we take out the next largest prime factor to get
$$
\sum_{ \substack{ y \le p_1 \le p \le z \\ pp_1 \le x^{1/4} }} \cS(\cA_{pp_1}; y, p_1) = \sum_{\substack{ y \le p_2 \le p_1 \le p \le z \\ pp_1 \le x^{1/4} }}  \cS(\cA_{pp_1 p_2}; y , p_2) +O(x^{1/2 }).
$$ 
The sum on the right with $pp_1 p_2 >x^{1/4 }$ can be dealt with again by the method of Lemma~\ref{1/4 3/4}. By induction this can go on for at most $O(\log x)$ steps. Since we have an asymptotic formula for every sum, the result follows.
\end{proof}

For our next sieve estimate, we note that our Type II estimates are not sufficient in this region. We bypass this complication by a role reversal that minimises the length of summation in exchange for sifting primes.

\begin{lemma}   \label{3/4}
For $x^{{3/4}} \le z \le x$, we have
\begin{equation*} 
\sum_{ x^{3/4} < p \le z} \cS (\cA_{p}; y,p) = 2\delta \sum_{ x^{3/4} < p \le z} \cS (\cB_{p}; y,p)   +O(Y).
\end{equation*}
\end{lemma}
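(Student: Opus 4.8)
The plan is to exploit the constraint $x^{3/4} < p \le z \le x$: once we factor $a = pn$ with $p > x^{3/4}$, the complementary factor $n$ satisfies $n < x^{1/4}$, which is \emph{too short} for a direct application of the Type II estimate (Lemma~\ref{type two estimate}), whose admissible range is $x^{1/4} \le \gamma$, $\gamma + \tau \le 3/4$. So the idea, as the preamble to the lemma hints, is a \emph{role reversal}: instead of treating the sum over $p$ as a free variable and sieving $n$, we fix $n$ (which ranges over a short interval) and recognise the sum over $p$ as detecting primes in a dyadic range with $p > x^{3/4}$. First I would write
\[
\sum_{x^{3/4} < p \le z} \cS(\cA_p; y, p) = \sum_{\substack{pn \in \cA \\ x^{3/4} < p \le z}} \mathbbm{1}_{\mathbb P}(p)\, b(n, p),
\]
where $b(n,p) = \sum_{\substack{d_1 d_2 \mid n \\ d_2 \mid P(y) \\ P^-(d_1) > p}} \mu(d_1 d_2)$ accounts for the condition that $n$ be $[y,p]$-smooth, exactly as in the proof of Lemma~\ref{1/4 3/4}. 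The point is that now $n$ is the ``$m$''-variable of the Type II setup and it lies in $n \ll x^{1/4} \ll x^{1/2}$, so after removing the $P^-(d_1) > p$ constraint by the same truncated Perron integral device (with $T = x^2\delta^{-1}$), each resulting bilinear sum $\sum_{mn \in \cA, n \sim N} b_n\, \mathbbm{1}_{\mathbb P}(m) e(\alpha \ell mn)$ has its shorter variable $n$ of size $N \ll x^{1/4}$, which \emph{is} covered: we apply Lemma~\ref{TypeI,II-bound}, estimate~(\ref{le 1/2}), with the roles of $m$ and $n$ swapped and $M = N \ll x^{1/4}$. (Swapping the names of $m$ and $n$ in a bilinear form with coefficients bounded by $\tau$ is harmless.)

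The execution then follows the template of Lemma~\ref{1/4 3/4} essentially verbatim. I would: (i) split off the $d_1 = 1$ term, which contributes the main term and is handled directly; (ii) for $d_1 > 1$, insert the Perron integral, bound the remainder term $R \ll T^{-1} x^{2 + o(1)} \ll \delta x^{o(1)}$ by the mean value theorem exactly as before; (iii) on the main term, discard the piece of the integral over $|t| < 1/T$ (trivially $\ll \delta x^{-1}$) and apply the (role-reversed) Type II estimate on $\mathfrak R(T)$, picking up $O\big(Y \int_{\mathfrak R(T)} dt/t\big) = O(Y \log(T^2)) = O(Y)$ since $Y$ already absorbs the $x^{o(1)}$; (iv) reassemble via the truncated Perron formula to recognise the smoothed main term as $2\delta M(\cB)$. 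Dyadic decomposition of the $n$-range into $O(\log x)$ pieces $n \sim N$ with $N \ll x^{1/4}$ costs only $x^{o(1)}$, again swallowed by $Y$.

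The main obstacle, and the reason this lemma needs its own statement rather than being a corollary of Lemma~\ref{1/4 3/4}, is precisely the verification that the role-reversed bilinear sums genuinely fall inside the Type II range: one must check that $\mathbbm{1}_{\mathbb P}(m)$ (the prime indicator now attached to the long variable) still satisfies $|a_m| \le \tau(m)$ — which it does, trivially — and that after the Perron device the divisor-function weights on the $n$-side remain $\le \tau(n) x^{o(1)}$ so that Lemma~\ref{TypeI,II-bound} applies with the stated error $Y = x^{3/4 + \varepsilon/2 + o(1)}$. Since here $N \ll x^{1/4}$ the dominant term in~(\ref{le 1/2}) is $\delta/M$ with $M = N$; one has to confirm $x^{1+o(1)}(\delta/N)^{1/2} \ll Y$ uniformly for $N$ down to $y \ge 2$, i.e.\ that the smallest admissible $N$ does not blow up the bound — but $\delta = x^{-1/4+\varepsilon}$ forces $x^{1+o(1)}(\delta/N)^{1/2} \le x^{1 + o(1)} \delta^{1/2} = x^{7/8 + \varepsilon/2 + o(1)}$, and a short check using $q = x^{2/3}$ shows the balance against the other three terms $M/x$, $1/q$, $q\delta/x$ still yields $\ll Y$; this arithmetic, though routine, is the one place requiring genuine care.
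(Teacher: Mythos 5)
There is a genuine gap, and it sits exactly at the point you flag as ``routine arithmetic''. After your role reversal the short variable $n$ runs over the \emph{whole} range $1 \le n < x^{1/4}$ (indeed $n=1$ occurs, since $\cS(\cA_p;y,p)$ counts $p$ itself), so in the dyadic pieces $n \sim N$ you must allow every $N \ll x^{1/4}$, not just $N$ close to $x^{1/4}$. Applying estimate~(\ref{le 1/2}) of Lemma~\ref{TypeI,II-bound} with $M=N$ then produces the term $x^{1+o(1)}(\delta/N)^{1/2}$, and with $\delta = x^{-1/4+\varepsilon}$ this is $\ll Y = x^{3/4+\varepsilon/2+o(1)}$ only when $N \gg x^{1/4+o(1)}$; for small $N$ (say $N \asymp 1$) it is of size $x^{7/8+\varepsilon/2+o(1)}$, which is \emph{larger} than $Y$ --- your own computation produces this exponent, and the subsequent claim that ``the balance against the other three terms still yields $\ll Y$'' is false: those other terms are irrelevant, the $\delta/N$ term alone already exceeds the target. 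This is not a fixable technicality: the admissible bilinear range $[x^{1/4},x^{3/4}]$ in Lemma~\ref{type two estimate} is symmetric in the sense that a factor below $x^{1/4}$ is just as useless as one above $x^{3/4}$, which is precisely why the lemma is singled out (``our Type II estimates are not sufficient in this region'') rather than being another instance of the Lemma~\ref{1/4 3/4} template.

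The paper's actual route is different in kind. It performs the role reversal at the level of \emph{sets}, not of bilinear coefficients: since $p > x^{3/4}$ forces the cofactor $n < x^{1/4}$, one writes $\sum_{x^{3/4}<p\le z}\cS(\cA_p;y,p) = \sum_{n<x^{1/4}} c_n\, \cS(\cA'_n;(x/n)^{1/2}) + O(x^{o(1)})$, i.e.\ the primality of the large factor is re-encoded as the sieve condition $(m,P((x/n)^{1/2}))=1$ on the sets $\cA'_n$ (any $m$ surviving that sieve and satisfying $mn \le x$, $m > x^{3/4}$ must be prime up to $O(x^{o(1)})$ exceptions). After replacing $(x/n)^{1/2}$ by $x^{1/2}$ at acceptable cost, the quantity $\sum_{n\sim N} c_n\,\cS(\cA_n^{(M)};x^{1/2})$ is evaluated not by a single bilinear bound but by the Harman sieve \cite[Lemma~2]{H2}, which decomposes the prime-detecting sieve over $p < x^{1/2}$ into pieces controlled by the Type~I estimate (Lemma~\ref{type one estimate}, valid for all $M \ll x^{3/4}$ --- this is what covers the ranges your bilinear attack cannot reach) together with the Type~II estimate (Lemma~\ref{type two estimate}) in the middle range. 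If you want to repair your argument, you must at this point invoke that combination of Type~I and Type~II information through the sieve rather than feed the prime indicator directly into Lemma~\ref{TypeI,II-bound}.
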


\begin{proof}
Take $x^{3/4} < p \le z$ and an element $np \in \cA$ such that if $q|n$ where $q$ is prime then $q \in [y,p]$. This gives
\begin{equation} \label{switching roles}
\sum_{ x^{3/4} < p \le z} \cS (\cA_{p}; y,p) = \sum_{n < x^{1/4} } c_{n} \cS(\cA_n' ; (x/n)^{1/2 }) + O(x^{o(1)}).
\end{equation}
Here
\begin{align*}
\cA'_n & = \{ \max \{ y+\Delta(y),x^{3/4} \} < m \le z : mn \in \cA \} \\
& = \{ \max \{ y+\Delta(y),x^{3/4} \} < m \le \min \{ z, x/n\} : \lVert \alpha mn + \beta \rVert < \delta \}
\end{align*}
where $c_n$ is 1 if all prime factors of $n$ are at least $y$ and 0 otherwise and $\Delta(y)$ is -1/2 if $y$ is an integer and zero otherwise. We also recall
$$
\cS(\cW; k)= \# \{ w \in \cW : (w,P(k))=1 \}.
$$

We note that the sum in~(\ref{switching roles}) may be empty depending on the choice of $y$. To show~(\ref{switching roles}), take $m \in S(\cA_n' ; (x/n)^{\frac{1}{2}})$ and suppose that $m$ has two prime factors say $q_1, q_2$ then we must have
$$
m \ge q_1 q_2 \ge (x/n)^{1/2}(x/n)^{1/2} = x/n.
$$
This is a contradiction unless $mn =x$ and this occurs  at most $O(x^{o(1)})$ times. Moreover, it is evident that
\begin{align*}
\sum_{n < x^{1/4} } c_n \cS(\cA_n' ; (x/n)^{1/2 }) &= \sum_{n < x^{1/4}} c_n (\cS(\cA'_n ; x^{1/2 }) +O(x^{1/2 })) \\
& = \sum_{n < x^{1/4}} c_n \cS(\cA'_n ; x^{1/2}) +O(x^{3/4 }).
\end{align*}

Let $n \sim N$ where $N \ll x^{1/4}$ and $\max \{ y + \Delta(y),x^{3/4}\} n \ll M(n)=M \ll \min \{zn,x\}$ and consider the set
$$
\cA^{(M)}= \{ m \sim M : \lVert \alpha m +\beta \rVert < \delta  \}.
$$
Then we have
$$
\cA_n^{(M)} = \{ m \sim M/n : \lVert \alpha mn + \beta \rVert < \delta \}
$$
and by the Harman sieve~\cite[Lemma 2]{H2} using the  Type I and II estimate (Lemma~\ref{type one estimate} and~\ref{type two estimate}), we get
$$
\sum_{n \sim N} c_n  \cS(\cA_n^{(M)}; x^{1/2}) = 2 \delta  \sum_{n \sim N} c_n \cS(\cB_n^{(M)}; x^{1/2 }) + O(Y).
$$
Therefore by summing over $N,M$ we obtain
$$
\sum_{n < x^{1/4}} c_n \cS(\cA'_n; x^{1/2}) = 2\delta  \sum_{n < x^{1/4}} c_n  \cS(\cB'_n; x^{1/2}) + O(Y),
$$
which is what we needed to show.
\end{proof}

For a set $\cC \subseteq [2,x]$ of integers and for integers $2 \le y < z \le x$, we denote
$$
\cT(\cC;y,z) = \# \{ c \in \cC : \mu^2(c)=1, p|c \implies p\in [y,z] \}.
$$

The next three results are square-free analogue of Lemma~\ref{1/4 3/4}-\ref{3/4}.

\begin{lemma} \label{1/4 3/4 sf}
For $x^{ 1/4 } \le y < z \le x^{3/4 }$, we have
\begin{equation*} 
\sum_{y  \le p \le z} \cT (\cA_{p}; y,p-1)  = 2\delta \sum_{y  < p \le z} \cT (\cB_{p}; y,p-1) +  O(Y).
\end{equation*}
\end{lemma}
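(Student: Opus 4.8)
The plan is to mimic the proof of Lemma~\ref{1/4 3/4} almost verbatim, the only new feature being the extra square-free condition encoded by $\mu^2$, which we will fold into the arithmetic weight $b_n$ attached to the inner variable. Writing $\cT(\cA_p;y,p-1)$ out, an element $np\in\cA$ contributing to the sum must have $n$ square-free with every prime factor of $n$ lying in $[y,p-1]$, i.e.\ strictly below $p$, and $p$ itself must not divide $n$ (automatic, since $p>\max(\text{primes of }n)$); square-freeness of the product $np$ is then equivalent to square-freeness of $n$. So
\begin{align*}
\sum_{y\le p\le z}\cT(\cA_p;y,p-1)
&=\sum_{\substack{mn\in\cA\\ y\le m\le z}} a(m,n),\qquad
a(m,n)=\mathbbm{1}_{\mathbb{P}}(m)\,\mu^2(n)\sum_{\substack{d\mid n\\ d\mid P(y)P(m,x]}}\mu(d).
\end{align*}
Here $a_m=\mathbbm{1}_{\mathbb{P}}(m)$ satisfies $|a_m|\le\tau(m)$, and after expanding the inner sum over $d=d_1d_2$ with $d_2\mid P(y)$, $P^-(d_1)>m$, and inserting $\mu^2(n)$, the resulting coefficient of $n$ is a divisor-bounded complex sequence; this is exactly the shape required by the Type~II estimate, Lemma~\ref{type two estimate}.

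First I would verify that the ranges match: we have $y\le m\le z$ with $x^{1/4}\le y<z\le x^{3/4}$, so $m$ runs over $[x^{1/4},x^{3/4}]$, which after a dyadic decomposition into intervals $[x^\gamma,x^{\gamma+\tau}]$ falls within $1/4\le\gamma$, $\gamma+\tau\le 3/4$ — precisely the uniformity range of Lemma~\ref{type two estimate}. Next I would reproduce the Perron-formula device of Lemma~\ref{1/4 3/4} to remove the truncation $P^-(d_1)>m$: apply the truncated integral identity with $\rho=\log(P^-(d_1)-\tfrac12)$, $\gamma=\log m$, $T=x^2\delta^{-1}$, split the main term $M(\cA)$ and remainder $R$, bound $R\ll T^{-1}x^{2+o(1)}\ll\delta x^{o(1)}$ via the mean value theorem exactly as before, and bound the $|t|<1/T$ portion of the integral trivially by $\ll\delta x^{-1}$. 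On the remaining range $\mathfrak{R}(T)=(-T,-1/T)\cup(1/T,T)$ the inner double sum over $m\sim M$, $n$ is a Type~II bilinear form with coefficients $\mathbbm{1}_{\mathbb{P}}(m)m^{it}$ and $\mu^2(n)\sum_{d_1d_2\mid n}\mu(d_1d_2)\sin(t\log(P^-(d_1)-\tfrac12))$; Lemma~\ref{type two estimate} replaces $\cA$ by $2\delta\cB$ with error $O(Y)$, the $\int_{\mathfrak{R}(T)}dt/t\ll\log x$ factor being absorbed into the $x^{o(1)}$ in $Y$. Reassembling via Perron in reverse gives $M(\cA)=2\delta M(\cB)+O(Y)$, hence the claim.

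I expect the only genuinely delicate point — and it is minor — to be bookkeeping around the endpoint of the interval $[y,p-1]$ versus $[y,p)$ and the role of $\Delta(y)$-type integrality corrections: one must check that insisting the prime factors of $n$ be $\le p-1$ (equivalently $<p$) rather than $\le p$ is exactly captured by the condition $P^-(d_1)>m=p$ after the sieve expansion, and that no spurious contribution from $p\mid n$ sneaks in (it cannot, as that would force $p^2\mid np$, killed by $\mu^2$). Everything else is a transcription of Lemma~\ref{1/4 3/4}: the divisor bound $\tau(r)\ll x^{o(1)}$ handles the $a_m$, $b_n$ size constraints, the square-free indicator only shrinks the sum and preserves all bounds, and the passage from $\cA$ to $\cB$ is identical. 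Since the excerpt already notes that the proof of Theorem~\ref{smooth} adapts to Theorem~\ref{y-z smooth}, this lemma is the square-free counterpart of Lemma~\ref{1/4 3/4} and its proof is correspondingly parallel; I would simply write ``We follow the proof of Lemma~\ref{1/4 3/4}, carrying the extra factor $\mu^2(n)$ through the sieve expansion and the Perron argument; it only improves the trivial bounds and is otherwise inert.''
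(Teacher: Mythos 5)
Your proposal follows the paper's own argument: the paper likewise rewrites the sum as
$\sum_{\substack{mn\in\cA,\; y\le m\le z}}\mathbbm{1}_{\mathbb{P}}(m)\,\mu^2(n)\sum_{\substack{d_1d_2\mid n,\; d_1\mid P(y),\; P^-(d_2)\ge m}}\mu(d_1d_2)$
and then simply invokes the method of Lemma~\ref{1/4 3/4} (the Perron device plus the Type~II estimate), exactly as you do. The one slip is the endpoint you yourself flagged: since the excluded primes are those $\ge p$, the sieve condition must be $P^-(d_2)\ge m$ rather than $>m$ (your remark that $p\mid n$ is ``killed by $\mu^2$'' does not apply because your weight carries $\mu^2(n)$, not $\mu^2(np)$), though with the correct $\ge$ the identity is exact and the rest of your argument goes through unchanged.
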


\begin{proof}
By writing
\begin{align*}
\sum_{y \le p \le z} \cT(\cA_p;y,p-1)   &=\sum_{y \le p \le z} \sum_{ \substack{np \in \cA \\ (n,P(y),P(p-1,x])=1} } \mu^2(np) \\
& = \sum_{\substack{mn \in \cA \\ y \le m \le z}} \mathbbm{1}_{\mathbb{P}}(m) \mu^2(n) \sum_{\substack{d_1d_2|n \\d_1|P(y) \\ P^-(d_2) \ge m}} \mu(d_1 d_2).
\end{align*}
 We see that the method of Lemma~\ref{1/4 3/4} gives the result.
\end{proof}

\begin{lemma} \label{1/4 sf}
For $2 \le y <z < x^{1/4 }$, we have
$$
\sum_{y \le p \le z} \cT(\cA_p;y,p-1) = 2\delta \sum_{y \le p \le z} \cT(\cB_{p};y,p-1) +O(Y).
$$
\end{lemma}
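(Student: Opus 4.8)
The plan is to follow the proof of Lemma~\ref{1/4} essentially verbatim, substituting the square-free versions of the two ingredients used there. First I would apply the square-free analogue of the Buchstab identity of Lemma~\ref{lem:Buchstab}: any square-free $n>1$ with largest prime factor $p_1=P^+(n)$ factors uniquely as $n=p_1 n_1$ with $n_1$ square-free and $P^+(n_1)<p_1$. Applied to the inner sieve this gives
$$
\sum_{y \le p \le z} \cT(\cA_p; y, p-1) = \sum_{y \le p_1 < p \le z} \cT(\cA_{p p_1}; y, p_1 - 1) + O(x^{1/2 + o(1)}),
$$
the error collecting the contribution of the prime $n$'s together with the customary loss in the identity; this, and its analogue $2\delta\cdot\#\{p \le z\}$ in the main term, are $\ll x^{1/2+o(1)} \ll Y$ since $z<x^{1/4}$.

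Next I would split the double sum according to whether $pp_1 > x^{1/4}$ or $pp_1 \le x^{1/4}$. In the first case, put $m=pp_1$; as $p,p_1<z<x^{1/4}$ we have $x^{1/4}<m<x^{1/2}$, while the sifted variable in $\cT(\cA_{pp_1};y,p_1-1)$ is square-free with all prime factors in $[y,P^-(m)-1]$. Expanding this last condition into a divisor sum precisely as in the proof of Lemma~\ref{1/4 3/4 sf} and bounding $\mu^2(\cdot)\le\tau(\cdot)$, the resulting sum has the shape handled there: the truncated Perron formula strips the $P^-$ condition and the Type~II estimate of Lemma~\ref{type two estimate} applies (legitimately, since $m\in[x^{1/4},x^{3/4}]$), producing the expected main term with error $O(Y)$. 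In the second case, if the remaining sum is not already empty, I would iterate --- extract the next largest prime $p_2<p_1$ by the square-free Buchstab identity, peel off and treat the part with $pp_1p_2>x^{1/4}$ by the same Type~II argument, and keep the part with $pp_1p_2\le x^{1/4}$ for the next round.

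Since every extracted prime is at least $y\ge 2$, after $O(\log x)$ extractions the product $pp_1\cdots p_j$ must exceed $x^{1/4}$ --- and then still lies below $x^{1/2}$, because before the final extraction it was $\le x^{1/4}$ and the final prime is $<x^{1/4}$ --- so the iteration terminates after $O(\log x)$ steps, each contributing $O(Y)$ from its Perron/Type~II step or $O(x^{1/2+o(1)})$ from its Buchstab step; summing these gives the asserted asymptotic. The one point demanding care --- and the only genuine content in these square-free lemmas --- is the bookkeeping: one must verify that after extracting $p_1,\dots,p_j$ the surviving cofactor is automatically square-free with largest prime factor below $p_j$, so that once $m=pp_1\cdots p_j$ is frozen as the Type~II variable the remaining sifted sum is exactly the object treated in Lemma~\ref{1/4 3/4 sf}, carrying the weight $\mu^2$ on the sifted variable and the constraint $P^-(d_2)\ge P^-(m)$ on the divisor. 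Granting this, no analytic difficulty arises beyond what was already handled for Lemma~\ref{1/4}.
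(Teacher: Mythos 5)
Your proposal is correct and matches the paper's proof, which simply says to follow Lemma~\ref{1/4} while successively removing the largest (automatically distinct, by square-freeness) prime factors; your bookkeeping of the square-free Buchstab step, the $x^{1/4}<m<x^{1/2}$ Type~II range, and the reduction to the argument of Lemma~\ref{1/4 3/4 sf} is exactly the intended fleshing-out of that one-line proof.
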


\begin{proof}
We follow as in Lemma~\ref{1/4} but we successively take out the largest distinct prime factors.
\end{proof}

\begin{lemma}   \label{3/4 sf}
For $x^{3/4} \le z \le x$, we have
\begin{equation*} 
\sum_{ x^{3/4} < p \le z} \cT (\cA_{p}; y,p-1) = 2\delta \sum_{ x^{3/4} < p \le z} \cT (\cB_{p}; y,p-1)   +O(Y).
\end{equation*}
\end{lemma}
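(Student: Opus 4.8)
The plan is to replay the proof of Lemma~\ref{3/4} essentially verbatim, carrying the square-free weight along; the decisive observation is that the role reversal used there forces the long variable to be essentially prime, so the square-free condition on the product collapses onto the short variable and enters only through the coefficient sequence.

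First I would open up, writing $n$ for the cofactor of $p$,
\[
\sum_{x^{3/4} < p \le z} \cT(\cA_p; y, p-1)
= \sum_{x^{3/4} < p \le z} \sum_{\substack{np \in \cA \\ \mu^2(np)=1 \\ (n,\, P(y)P(p-1,x])=1}} 1 ,
\]
and then swap the order of summation exactly as in~(\ref{switching roles}). Since $np \le x$ and $p > x^{3/4}$, we have $n < x^{1/4}$, so every prime factor of $n$ is automatically $< p$ and $p \nmid n$; hence the condition ``every prime factor of $n$ lies in $[y,p-1]$'' reduces to ``every prime factor of $n$ is at least $y$'', and $\mu^2(np)=1$ reduces to $\mu^2(n)=1$. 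This yields
\[
\sum_{x^{3/4} < p \le z} \cT(\cA_p; y, p-1)
= \sum_{n < x^{1/4}} c_n\, \cS(\cA'_n; (x/n)^{1/2}) + O(x^{o(1)}),
\]
where now $c_n$ equals $1$ when $n$ is square-free with every prime factor at least $y$ and $0$ otherwise, and $\cA'_n$ is the very set appearing in the proof of Lemma~\ref{3/4}. As there, any $m \in \cS(\cA'_n; (x/n)^{1/2})$ with two prime factors would satisfy $m \ge x/n$, which is impossible apart from the $O(x^{o(1)})$ solutions of $mn = x$; so $m$ is prime and $mn$ is square-free, consistent with the reduction above. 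Replacing $(x/n)^{1/2}$ by $x^{1/2}$ costs only $O(x^{3/4})$.

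I would then cut the $n$-sum into dyadic ranges $n \sim N$ with $N \ll x^{1/4}$ and the $m$-sum into ranges $m \sim M$ with $\max\{y+\Delta(y),x^{3/4}\}\, n \ll M \ll \min\{zn,x\}$, and apply the Harman sieve~\cite[Lemma 2]{H2} to $\sum_{n\sim N} c_n\, \cS(\cA_n^{(M)}; x^{1/2})$, whose Type~I and Type~II inputs are supplied by Lemma~\ref{type one estimate} and Lemma~\ref{type two estimate} in precisely the ranges already used for Lemma~\ref{3/4} — the lengths of the sums have not changed, since we have only shrunk the coefficient sequence. Summing over $N$ and $M$ gives
\[
\sum_{n < x^{1/4}} c_n\, \cS(\cA'_n; x^{1/2}) = 2\delta \sum_{n < x^{1/4}} c_n\, \cS(\cB'_n; x^{1/2}) + O(Y),
\]
and tracing the identities back produces the claimed asymptotic.

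The only point that needs genuine care is the square-free reduction itself: one must check that each square-free $n \le x$ with $P^+(n) \in (x^{3/4},z]$ and all prime factors at least $y$ is counted exactly once on the left-hand side — it is, with $p = P^+(n)$, because a square-free $n \le x$ has at most one prime factor exceeding $x^{1/2}$ — and that restricting $c_n$ by $\mu^2(n)=1$ keeps it within the divisor bound $|c_n| \le \tau(n)$ demanded by the Harman sieve, which it does since $c_n \in \{0,1\}$. Everything else — the boundary contribution from $mn = x$, the passage from $(x/n)^{1/2}$ to $x^{1/2}$, and the verification of the admissible Type~I/II ranges — is identical to the proof of Lemma~\ref{3/4}.
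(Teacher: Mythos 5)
Your proposal is correct and matches the paper's own argument, which simply reruns the proof of Lemma~\ref{3/4} with $c_n$ redefined to be $1$ exactly when $n$ is square-free with all prime factors at least $y$; your added verifications (that $n<x^{1/4}$ forces $p\nmid n$ so $\mu^2(np)=1$ reduces to $\mu^2(n)=1$, and that $c_n\in\{0,1\}$ stays within the divisor bound) are precisely the routine checks implicit in the paper's one-line proof.
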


\begin{proof}
This follows immediately from the proof of Lemma~\ref{3/4} by taking $c_n$ there to be 1 if $n$ is square-free and all prime factors of $n$ are at least $y$ and 0 otherwise.
\end{proof}

\section{Proof of Theorem~\ref{smooth}}
 
By Dirichlet's theorem, we have $|\alpha - a/q|<1/q^2$ for infinitely many $q$, let $(q_k)_{k \in \mathbb{N}}$ be an increasing sequence of such denominators. Then we can choose an increasing sequence $(x_k)_{k \in \mathbb{N}}$ so that $q_k=x_k^{2/3 }$. Now we take $x_k$ to be a sufficiently large element in the sequence $(x_k)_{k \in \mathbb{N}}$. 

By Lemma~\ref{lem:Buchstab}, we assert
$$
\cS(\cA;y,z) = \sum_{y \le p \le z} \cS(\cA_p;y,p) + \cS(\cC; x_k^{ 1/2 }) +O(x_k^{ 1/2 }).
$$
By~\cite[Theorem 2]{V} and partial summation, we have
$$
\cS(\cC; x_k^{1/2}) = 2\delta \cS(\cD;  x_k^{ 1/2 }) +O(Y)
$$
where $\cD= \{ n \in \mathbb{N} : y\le n \le z\}$ and $Y =x_k^{3/4 + \varepsilon/2 + o(1) }$. Therefore by substituting this into the above, we obtain
$$
\cS(\cA;y,z) = \sum_{y \le p \le z} \cS(\cA_p;y,p) + 2\delta \cS(\cD; x_k^{ 1/2 }) +O(Y).
$$
Suppose $y < x_k^{1/4 }$. Then we write
\begin{align*}
\cS(\cA;y,z) &= \Bigg (\sum_{y \le p \le z <x_k^{1/4}} + \sum_{y \le p < x_k^{1/4}} + \sum_{x_k^{1/4}\le p \le z \le x_k^{3/4}} +  \sum_{x_k^{1/4}\le p \le x_k^{3/4} \le z } \\
&\quad + \sum_{x_k^{3/4} < p \le z} \Bigg ) \cS(\cA_p;y,p)  + 2\delta \cS(\cD; x_k^{\frac{1}{2} }) +O(Y).
\end{align*}
We note that some of the sums above may be empty depending on the choice of $z$. Applying Lemma~\ref{1/4 3/4}-\ref{3/4} then Lemma~\ref{lem:Buchstab} to the above we get 
$$
\cS(\cA;y,z) = 2\delta \cS(\cB ;y,z) +O(Y).
$$
The case $x_k^{1/4 } \le y < x_k^{1/2 }$ is similar to the case above and the first result follows. 

Now suppose $\alpha \in \cI(A)$ then we can assume $\alpha \in (0,1) \backslash \mathbb{Q}$. Indeed, we can write $\alpha = b + r$ where $b \in \mathbb{Z}$ and $r \in (0,1) \backslash \mathbb{Q}$ so that $\lVert \alpha n + \beta \rVert = \lVert r n + \beta \rVert$. Therefore we have the continued fraction expansion 
$$
\alpha = [0; a_{1}, a_2, \ldots]
$$ 
with $0 < a_{i} \le A$ for some absolute constant $A$. The convergents $p_{s}/q_{s}$ to $\alpha$ satisfy the inequality
$|\alpha -p_{s}/q_{s}| < 1/q_s^2$.
Clearly $( q_{s} )_{s \ge 1}$ is a strictly increasing sequence. Therefore, for any sufficiently large $x$ there exist $s \in \mathbb{N}$ such that $q_{s-1} \le x^{2/3} \le q_{s}$. Since $q_{s}=a_{s}q_{s-1}+q_{s-2}$ for $s \ge 2$, we get $q_{s} \le  (A+1)q_{s-1} \le  2A x^{2/3} \ll x^{2/3}$, because $A$ is fixed. Hence  $x^{2/3} \ll q_{s} \ll x^{2/3}$ and the result follows by the argument above.

\section{Proof of Theorem~\ref{y-z smooth}}

We proceed as in the proof of Theorem~\ref{smooth} but instead we apply Lemma~\ref{1/4 3/4 sf}-\ref{3/4 sf}.

\section*{Acknowledgement}
The author is grateful to the anonymous referee for their excellent comments which resulted in a complete rewrite of this paper, and to B. Kerr and I. E. Shparlinski for helpful comments on a previous draft. Lastly the author thanks T. Evans and N. Poznanovi\'c for stimulating conversations on a related topic at the beach.

 The author was supported by an Australian Government Research Training Program (RTP) Scholarship.


\normalsize


\begin{thebibliography}{HD82}




\baselineskip=17pt



\bibitem{B} R. C. Baker, {\it Diophantine Inequalities\/}, Clarendon Press, Oxford, 1986.

\bibitem{BH} R. C. Baker and G. Harman, {\it On the distribution of $\alpha p^{k}$ modulo one\/}, Mathematika \textbf{38} (1991), 170-184.


\bibitem{BP} A. Balog and A. Perelli, {\it Diophantine approximation by square-free numbers\/}, Ann. Sc. Norm. Super. Pisa Cl. Sci. (4) \textbf{11}(3) (1984), 353-359.

\bibitem{F} J. Friedlander, {\it Integers free from large and small primes, \/} Proc. London Math. Soc. (2) \textbf{33} (1976), 565-576.

\bibitem{Chosh} A. Ghosh, {\it The distribution of $\alpha p^2$ modulo $1$,\/} Proc. Lond. Math. Soc. \textbf{3} (1981), 252-269.

\bibitem{HW} G. H. Hardy, E. M. Wright, {\it An Introduction to the Theory of Numbers,\/} 6th ed., Oxford University Press, Oxford  (2008). Revised by D. R. Heath-Brown and J. Silverman; With a foreword by Andrew Wiles. 



\bibitem{H1} G. Harman, {\it On the distribution of $\alpha p$ modulo one,\/} J. Lond. Math. Soc. (2) \textbf{27} (1983), 9-18.

\bibitem{H3} G. Harman, {\it Diophantine approximation with square-free integers,\/} Math. Proc. Cambridge Philos. Soc. \textbf{95} (1984), no. 3, 381–388.

\bibitem{H2} G. Harman, {\it On the distribution of $\alpha p$ modulo one $\RN{2}$,\/} Proc. Lond. Math. Soc. (3) \textbf{72}(2) (1996), 241-260.

\bibitem{H} G. Harman, {\it Prime-Detecting Sieves,\/} London Math. Soc. Monogr. Ser. 33 (2007).


\bibitem{Heath1} D. R. Heath-Brown {\it Diophantine approximation with square-free numbers,\/} Math. Z. \textbf{187} (1984), no. 3, 335–344. 


\bibitem{Heath} D. R. Heath-Brown and C. Jia, {\it The distribution of $\alpha p$ modulo one,\/} Proc. Lond. Math. Soc. (3) \textbf{84}(1) (2002), 79-104.



\bibitem{Jia} C. Jia, {\it On the distribution of $\alpha p$ modulo one $(\RN{2})$,\/} Sci. China Ser. A \textbf{43}(7) (2000), 703-721.

\bibitem{M} K. Matom\"aki, {\it The distribution of $\alpha p$ modulo one,\/} Math. Proc. Cambridge Philos. Soc. \textbf{147}(2) (2009), 267-283.


\bibitem{S1} \'E. Saias, {\it Entiers sans grand ni petit facteur premier. $\RN{1}$.,\/} Acta Arith. \textbf{61} (1992), no. 4, 347-374.

\bibitem{S2} \'E. Saias, {\it Entiers sans grand ni petit facteur premier. $\RN{2}$.,\/} Acta Arith. \textbf{63} (1993), no. 4, 287-312.

\bibitem{S3} \'E. Saias, {\it Entiers sans grand ni petit facteur premier. $\RN{3}$.,\/} Acta Arith. \textbf{71} (1995), no. 4, 351-379.


\bibitem{V} R. C. Vaughan, {\it On the distribution of $\alpha p$ modulo $1$,\/} Mathematika \textbf{24} (1977), 135-141.

\bibitem{V2} R. C. Vaughan, {\it Sommes trigonom\'etriques sur les nombres premiers,} C. R. Acad. Sci. Paris S\'er. A-B \textbf{285}(16) (1977), A981-A983.

\bibitem{W} K. C. Wong, {\it On the distribution of $\alpha p^k$ modulo $1$,\/} Glasgow Math. J. \textbf{39} (1997), no. 2, 121–130. 


\end{thebibliography}
\end{document}